\DeclareMathOperator{\diff}{Diff}
\def\Diff{\diff_+^2(\S)}
\def\C{\mathbb{C}}
\def\Z{\mathbb{Z}}
\def\P{\mathbb{P}^1}
\def\S{{\mathbb{S}^1}}
\def\D{\mathbb{D}^+}
\def\CD{\mathbb{D}^-}
\def\a{\alpha}
\def\g{\gamma}
\def\ssm{\smallsetminus}
\def\A{\mathcal{A}}
\def\U{\mathcal{U}}
\newcommand{\dd}{\mathrm{d}}
\DeclareMathOperator{\tr}{Tr}
\DeclareMathOperator{\id}{id}
\DeclareMathOperator{\I}{I}
\DeclareMathOperator{\GL}{GL}
\DeclareMathOperator{\SL}{SL}
\DeclareMathOperator{\MM}{M}
\DeclareMathOperator{\Vect}{Vect}
\theoremstyle{plain}
\newtheorem{thm}{Theorem}
\newtheorem{cor}{Corollary}[section]
\newtheorem{prop}{Proposition}[section]
\newtheorem{lemma}{Lemma}[section]
\newtheorem*{thm*}{Theorem}
\theoremstyle{definition}
\newtheorem{defn}{Definition}[section]
\title{Universal Schlesinger system and Birkhoff factorization}
\author{Laura \textsc{Desideri}%
\thanks{Laboratoire Paul Painlev\'e (U.M.R. CNRS 8524), U.F.R. de Math\'ematiques, Universit\'e Lille 1, 59655 Villeneuve d'Ascq Cedex, France -- Email: \texttt{Laura.Desideri@math.univ-lille1.fr}}}
\date{}
\begin{document}

\maketitle

\begin{abstract}
The aim of this paper is to establish an infinite dimensional generalization of the Schlesinger system --- a system of PDE's describing isomonodromic deformations of Fuchsian systems. This \emph{universal Schlesinger system} first appeared in a paper by Korotkin and Samtleben in the finite dimensional case (i.e. when it reduces to the finite dimensional Schlesinger system). We intend here to establish it in its full generality, and to give its geometrical meaning in terms of deformations of Birkhoff factorizations.

The Birkhoff factorization we are considering consists in finding a piecewise holomorphic matrix-valued function $Y$ on $\P\ssm\S$ which admits a prescribed multiplicative jump $G : \S\to \GL_N(\mathbb C)$ across the unit circle $\S$.

We explain how this factorization can be deformed  by the action of the group of diffeomorphisms of the circle, which acts by composition on the multiplicative jump $G$. The integrability equations of this  ``isomonodromic'' deformation is given by the universal Schlesinger system, which is then naturally expressed by means of Virasoro generators.
\end{abstract}

\bigskip

\noindent
\emph{Keywords:}  isomonodromic deformation ; Schlesinger system ; Birkhoff factorization ; Riemann--Hilbert problem ;  group of diffeomorphisms of the circle ; Virasoro generators.


\section*{Introduction}
\addcontentsline{toc}{section}{Introduction}
The $p$-dimensional $N\times N$ \emph{Schlesinger system} is the following system of PDE's for $p$ unknown matrices $A_i(t)\in\MM_N(\C)$ depending on $p$ independent variables $t=(t_1, \ldots,t_p)\in\C^p$
\begin{equation}
  \dfrac{\partial A_i}{\partial t_j} = \dfrac{[A_i,A_j]}{t_i-t_j} , \; i\neq j, \qquad \quad
  \displaystyle\sum_{i=1}^p\dfrac{\partial A_i}{\partial t_j}=0.
\label{schlesinger}
\end{equation}
It describes isomonodromic deformations of Fuchsian systems. More precisely, consider a $N\times N$ Fuchsian system on the Riemann sphere $\P=\C\cup\{\infty\}$, i.e. a first order $N\times N$ linear ordinary differential system with only simple poles at $x=t_i$ ($\neq\infty$) 
\begin{equation}
\frac{\dd Y}{\dd x} = \A(x,t) Y, \qquad \text{with } \A(x,t) = \sum_{i=1}^p \frac{A_i(t)}{x-t_i} \  \text{ and } \sum_{i=1}^p A_i(t) = 0, 
\label{fuchsian-sys}
\end{equation}
whose residue matrices $A_i(t)\in\MM_N(\C)$ depend on the position of singularities $t=(t_1, \ldots,t_p)$. Then the residue matrices $A_i(t)$ solve the Schlesinger system if and only if the fundamental matrix solution $Y(x,t)$ of System~\eqref{fuchsian-sys} normalized at infinity by $Y(\infty,t)=\I_N$ has a $t$-independent monodromy representation. The aim of this paper is to generalize this deformation of Fuchsian systems by the Schlesinger system to a larger class of differential systems whose singular locus would not be finite any more, but would instead be a curve.

\paragraph{The system of Korotkin and Samtleben} In~\cite{KorotkinSamtleben}, D. Korotkin and H. Samtleben studied symmetries of the Schlesinger system by generalizing Oka\-mo\-to's equation to the $2\times2$ Schlesinger system. To this end, they rewrote for an arbitrary $N$ the Schlesinger system in a \emph{symmetric universal form}. Namely, they introduced the following differential operators (which satisfy the commutation relation of the Virasoro algebra):
\begin{equation}
\tilde L_m= \sum_{i=1}^p t_i^{m+1}\frac{\partial}{\partial t_i} \qquad\ \text{ for } m=-1, 0, 1,\ldots 
\label{vir-kor-samt}
\end{equation}
and the following change of dependent variables:
\[
B_n =  \sum_{i=1}^p t_i^{n+1} A_i \qquad\ \text{ for } n= 0, 1,\ldots 
\]
and proved that the Schlesinger system implies
\begin{equation}
\tilde L_m B_n =\sum_{k=1}^m \left[B_k,B_{m+n-k}\right] + nB_{m+n} \qquad \text{ for all } n\geq 0, m\geq -1.
\label{univ-kor-samt}
\end{equation}
Conversely, to derive the Schlesinger system from the infinite (dependent) set of equations~\eqref{univ-kor-samt}, it suffices to consider the equations for $n\leq p$ and $m\leq p$. What is remarkable in System~\eqref{univ-kor-samt} is its ``formal'' independence on the number $p$ of the poles $t_i$. The number and positions of the poles only enter in the definition of the differential operators $\tilde L_m$ and the variables $B_n$: System~\eqref{univ-kor-samt} provides us with a universal form of the Schlesinger system.

As Korotkin and Samtleben mentioned at the end of their work, the full set of equations~\eqref{univ-kor-samt} (including also the cases $n\leq -1, m\leq -2$) should have a geometrical meaning in its full generality, namely when the variables $B_m$ are independent. Such a system can not be defined from a Fuchsian system~\eqref{fuchsian-sys}. It should be instead a natural candidate for a generalization of isomonodromic deformations of systems of ODE's with an infinite number of singular points. We prove here that it is in fact the case: we will introduce the appropriate framework in which this universal Schlesinger system appears in its full generality, and give its geometrical interpretation as the integrability condition of a certain ``isomonodromic'' deformation of the Birkhoff factorization.

\paragraph{The Birkhoff factorization} consists in finding a holomorphic matrix-valued function having a prescribed multiplicative jump across a given curve. We consider the situation where all partial indices are zero. We will focus on the case where the given curve is the unit circle $\S$. Then, for a given (smooth) loop $G:\S\to \GL_N(\C)$ in the set on invertible $N\times N$ matrices, the Birkhoff factorization of data $\left( \S, G\right)$ is to find an invertible $N\times N$ matrix-valued function $Y(x)$, which is holomorphic on the inner and outer domains of the circle, and which admits pointwise limits $Y^+(\xi)$ and $Y^-(\xi)$ at any point $\xi\in\S$ satisfying
\[
Y^+(\xi) = Y^-(\xi) G(\xi).
\]
Precise hypothesis on $G$ will be set in Section 1 and some classical results will be given. When the jump matrix $G$ is piecewise constant, and the behavior of $Y$ at the singularities of $G$ appropriately prescribed, the Birkhoff factorization reduces to the Riemann--Hilbert problem for Fuchsian systems (see Section 1).

\bigskip

If one uses the terminology of the Schlesinger system, we can say that the loop $G$ plays here the role of monodromy, while the circle plays the role of varying singularity: the parameter of the deformation will be the diffeormorphisms of the circle, and we will say that  the loop $G$ is preserved if the group of diffeomorphisms acts on $G$ by  reparametrizations. We will prove that, for appropriate definitions of the unknown $B_n$ and of the differential operators $L_m$, the isomonodromic deformations are governed by  a universal Schlesinger~\eqref{univ-sch}, which is described ``for half'' by the system~\eqref{univ-kor-samt} of Korotkin and Samtleben.

\paragraph{The paper is organized as follows.} We start in \textbf{Section 1} by carefully stating the Birkhoff factorization in the form we need. If one sees the Birkhoff factorization as an inverse problem, the goal of the section is to state its corresponding direct problem, that is to say to introduce and study the class of first order matrix ordinary differential systems whose solutions solve the Birkhoff factorization. We prove that for a given $C^2$-smooth $G:\S\to\GL_N(\C)$, the solution $Y $ to the Birkhoff factorization of data $(\S,G)$, if it exists, solves the following system
\begin{equation}
\frac{\dd Y}{\dd x} = \A(x) Y, \qquad \text{with } \A(x) = \frac{1}{2i\pi} \oint_\S \frac{A(\xi)}{\xi-x}\dd \xi \ \text{ and } \oint_\S A(\xi)\dd \xi =0,
\label{A-intro}
\end{equation}
where the function $A:\S\to\MM_N(\C)$ is given by $A=Y^- \frac{\dd G}{\dd \xi} G^{-1} \left(Y^-\right)^{-1}$. The function $\A(x)$ is analytic on the exterior and the interior of the circle, and admits $A(\xi)$ as additive jump across the circle. The system~\eqref{A-intro} is the analogue of the Fuchsian system~\eqref{fuchsian-sys} with the circle as singular locus. The Fourier coefficients $B_n$, $n\in\Z$, of $A(\xi)$ are the coefficients of the series expansion of $\A(x)$ at $x=0$ and $x=\infty$. They will be the unknown functions of the universal Schlesinger system.

\vspace{.2cm}
In \textbf{Section 2} we define the ``isomonodromic'' deformation of system~\eqref{A-intro} and of the associated Birkhoff factorization by the action of the group of diffeomorphisms of the circle. We want to consider the singular points lying on the circle as varying parameters, and to describe those variations under which the monodromy $G$ is preserved. We consider the group $\Diff$ of the orientation preserving $C^2$-diffeo\-morphisms of the circle. It is a Banach manifold modeled on the vector space $\Vect ^2(\S)$ of $C^2$-vector fields along the circle.  We assume that system~\eqref{A-intro} depends on $\g\in\Diff$, that is to say $\A(x)=\A(x,\g)$. And so does its monodromy $G(\xi)=G(\xi,\g)$. We say that the monodromy is preserved if it is given by  a reparametrization by $\g$, i.e. $G(\xi,\g) = G_0\circ \g^{-1}(\xi)$.

Finally, we introduce the following complex vector fields $L_m$ ($m\in\Z$) on $\Diff$
\[
(L_m)_\g :\S\to\C, \qquad (L_m)_\g(\xi) = \g(\xi)^{m+1},
\]
which are obtained from Virasoro generators by a composition by $\g$, and span the tangent space of $\Diff$ at any $\g$. These vector fields enable us in the following section to write down the equations of the deformation.

\vspace{.2cm}
The aim of \textbf{Section 3} is to establish the integrability equations of the isomonodromic deformation, namely to prove that the coefficients $B_n(\g)$, which now depend on $\g$, solve the universal Schlesinger system
\begin{align*}
L_m\cdot B_n &= \ \sum_{k=0}^{n-1} \left[B_k,B_{m+n-k}\right] + nB_{m+n} \\
L_m\cdot B_{-n} &= -\sum_{k=-n}^{-1} \left[B_k,B_{m-n-k}\right] - nB_{m-n} 
\end{align*}
for all $m,n\in\Z$, with $ n\geq1$, if and only if we have an isomonodromic deformation of~\eqref{A-intro}. To this end, we first establish the action of Virasoro generators on the monodromy $G(\xi,\g)$ and characterize isomonodromic deformations by the equations
\[
\left(L_m\cdot G\right)_\g = - \xi^{m+1} \frac{\dd G}{\dd \xi} (\xi,\g).
\]
The strategy to establish the universal Schlesinger system is then similar to the one to establish the classical Schlesinger system, even if the objects and the tools are different. We exhibit an infinite dimensional Pfaffian system
\[
 \left\{
 \begin{aligned}
&\frac{\dd Y}{\dd x} = \A(x,\g) Y\\
&L_m\cdot Y = \Omega_m(x,\g) Y \qquad (m\in\Z)
 \end{aligned}
 \right.
 \]
which describes isomonodromic deformation, and whose integrability condition is equivalent to the universal Schlesinger system. The main tools are the Plemelj--Sokhotskii formula and the relations between series expansions and integral representations of the differential operators.

\bigskip

The approach followed in Section 1 could have a more algebraic formulation in the context of loop group factorizations (see~\cite{Guest}, \cite{PressleySegal}). In this point of view, one should mentioned the approach followed by M. Vajiac and K. Uhlenbeck in~\cite{VajiacUhlenbeck}, which is quite similar to the one of Sections 1 and 2 of the present paper: in the aim of establishing an action of Virasoro algebra on harmonic maps, they build infinitesimal transformations of a deformation by composition of a certain Birkhoff factorization.
\bigskip

As a conclusion, let us mentioned that the universal Schlesinger system presented here can be used in the case $N=2$ to describe deformations of minimal surfaces of disk type with a $C^3$ boundary curve, in the same spirit that the Schlesinger system provides deformations of minimal disks with a polygonal boundary curve (in the Garnier approach followed in~\cite{Desideri13}). During the deformation, the tangent direction of the boundary curve at any point is preserved, whereas the length  of the tangent vector is varying. This deformation might be used to solve the Plateau problem, if we succeed in prescribing this length function. In the same way that Garnier's solution to the Plateau problem reduces, when the minimal disk is planar, to Schwarz--Christoffel solutions to the Riemann mapping theorem, the minimal disks deformed by the universal Schlesinger system might reduce to the conformal maps studied by P. Wiegmann and A. Zabrodin in~\cite{WiegmannZabrodin},  and after by L. P. Teo~\cite{Teo,Teo2} and others. These conformal maps are deformed by the dispersionless Toda hierarchy, which should then be related to the universal Schlesinger system for $N=1$.


\section{Matrix-ODE formulation of the Birkhoff factorization}

\subsection{The Birkhoff factorization}
Let us state the Birkhoff factorization in the situation we are interested in.

\paragraph{The Birkhoff factorization}
Denote by $\D$ the unit disk in $\P=\C\cup\{\infty\}$, and by $\CD$ the outer domain of the unit circle ($\infty\in\CD$), so that $\P=\D\cup\S\cup\CD$.  

The Banach algebra $C^{k,\alpha}(\S)$ consists of those functions $f:\S\to\C$ with continuous derivative up to order $k$ whose $k$th-order derivative is Hölder continuous of exponent $\alpha$, $0<\alpha\leq1$, that is to say such that
\[
\Vert f \Vert_{k,\alpha} := \Vert f \Vert_\infty + \sup_{x\neq y} \frac{\vert f^{(k)}(y)-f^{(k)}(x)\vert}{\vert y-x\vert^\alpha} < +\infty.
\]

Then for a given Hölder continuous loop $G:\S\to \GL_N(\C)$, the Birkhoff factorization of data $\left( \S, G\right)$ is to find a $N\times N$ matrix-valued function $Y$ satisfying:
\begin{enumerate}
\item the functions $Y\big\vert _{\D}$ and $Y\big\vert _{\CD}$ are analytic  on $\D$ and $\CD$, and Hölder continuous on $\overline \D$ and on $\overline \CD$ respectively,
\item for all $\xi\in\S$ the two non tangential pointwise limits 
\[
Y^+(\xi)=\displaystyle\lim_{\substack{x\to\xi\\ x\in\D}} Y(x) \quad \text{ and } \quad Y^-(\xi)=\displaystyle\lim_{\substack{x\to\xi\\ x\in\CD}} Y(x)
\]
are related by
\begin{equation}
Y^+(\xi) = Y^-(\xi) G(\xi),
\label{jump}
\end{equation}
\item $Y(\infty)=\I_N$.
\end{enumerate}

\paragraph{Relation with the Riemann-Hilbert problem} Consider the case where the map $G$ is piecewise constant: take $p$ points $t_1,\ldots,t_p$ on the circle and assume that $G(\xi)= M_i\cdots M_1$ for $\xi\in\S$ between $t_i$ and $t_{i+1}$, where the constant matrices $M_i\in\GL_N(\C)$ satisfy $M_p\cdots M_1=\I_N$. The limits $Y^\pm(\xi)$ at $\xi\in\S\ssm\{t_i\}$ are taken as before in the pointwise sense, and we further assume that the behavior of $Y^\pm$ at the point $t_i$ is given by
\[
Y^\pm(\xi)(\xi-t_i)^\alpha \to 0 \quad \text{as } \xi\to t_i, \ \text{ for } 0\leq \alpha<1.
\]
It is then a classical result of Plemelj that, when at least one of the matrices $M_i$ is diagonalizable, the corresponding solution $Y$ to this Birkhoff factorization solves a Fuchsian system on the Riemann sphere of singularities $t_i$ and whose monodromy representation is generated by the matrices $M_i$. A detailed exposition of the relations between the Birkhoff factorization, the Riemann--Hilbert problem and Painlev\'e equations can be found in~\cite{FIKN}.

\paragraph{The Plemelj--Sokhotskii formula}
A major ingredient in the study of the Birkhoff factorization is the Plemelj--Sokhotskii formula (see~\cite{FIKN},~\cite{Giorgadze}). If one considers a continuous function $f:\S\to\C$, it defines a function
\begin{equation}
F(x) =  \frac{1}{2i\pi} \oint_\S \frac{f(\xi)}{\xi-x}\dd \xi
\label{F}
\end{equation}
which is analytic on $\D$ and $\CD$ and satisfies $F(\infty)=0$. The Cauchy operators $C_+$ and $C_-$ related to the unit circle $\S$
\[
C_\pm(f)(u) = \lim_{x\to u_\pm} \frac{1}{2i\pi} \oint_\S \frac{f(\xi)}{\xi-x}\dd \xi
\]
where $x\to u_\pm$ denotes the non-tangential limits from the $\pm$ side of $\S$ respectively, are then also well-defined at any point $u\in\S$. With our previous notations, $C_\pm(f)(\xi)=F^\pm(\xi)$. If we want the functions $C_\pm(f)$ to inherit the regularity of $f$, we need further assumptions on $f$. Indeed, the Cauchy operators $C_+$ and $C_-$ are bounded operators in $C^{0,\alpha}(\S)$. As operators in $C^{0,\alpha}(\S)$, they satisfy the \emph{Plemelj--Sokhotskii formula}
\[
C_\pm = \pm\frac12\mathbf{1} - \frac12 H
\]
where $\mathbf 1$ denotes the identical operator in $C^{0,\alpha}(\S)$, and $H$ is the Hilbert transform
\[
H(f)(u) = P.V. \frac{1}{i\pi} \oint_\S \frac{f(\xi)}{\xi-u}\dd \xi
\]
defined for $u\in\S$ as the Cauchy principal value of the improper integral. The Hilbert transform $H$ is also a bounded operator in $C^{0,\alpha}(\S)$.

In particular, we have $C_+-C_-=\mathbf 1$, that is to say for $f\in C^{0,\alpha}(\S)$
\[
F^+-F^-=f \qquad \text{ on }\S
\]
with $F$ given by~\eqref{F}. The solution~\eqref{F} to this boundary value problem becomes unique if one further requires that $F(\infty)=0$.

We have analogous results for functions in $L^p(\S)$ instead of Hölder continuous, see for instance~\cite{DeiftZhou02} or \cite{LitvinchukSpitkovskii}.

\paragraph{Existence and uniqueness}
The index $\text{Ind}_\S \det G$ is called the global index of the factorization. In the case where $\text{Ind}_\S \det G = 0$, the solution $Y$ to the Birkhoff factorization, if it exists, is uniquely determined by $G$.

Indeed, the function $y=\det Y: \P\ssm\S\to \C$ satisfies the one dimensional Birkhoff factorization $y^+ = y^- \det G$ on $\S$, and $y(\infty)=1$. When the global index is zero, we get, thanks to the Plemelj--Sokhotskii formula, that
\[
\det Y(x) = \exp\left( \frac{1}{2i\pi} \oint_\S \frac{\log \det G(\xi)}{\xi-x}\dd \xi\right).
\]
The functions $Y\big\vert _{\overline\D}$ and $Y\big\vert _{\overline\CD}$ are thus everywhere invertible. So if one considers two solutions $Y$ and $Z$ to the Birkhoff factorization, the function $Z Y^{-1}$ is holomorphic on the whole Riemann sphere and takes value $\I_N$ at $\infty$, i.e. $Y=Z$. In particular, the case $\text{Ind}_\S \det G = 0$ can be reduced to the $\SL_N(\C)$ case (see below).

The existence issue is a more complex question. Let us just state that the answer is positive on an open dense subset of the space of loops $G$. Let us introduce the following groups:
\begin{align*}
K &= C^{0,\a}\left(\S, \GL_N(\C)\right)\\
K^+ &= \left\{ Y:\D\to \GL_N(\C)  \text{ analytic } \big\vert  \ Y \text{ extends to } \S  \text{ and }  Y\in K \right\}\\
K^-_0 &= \left\{ Y:\CD\to \GL_N(\C)  \text{ analytic } \big\vert  \  Y \text{ extends to } \S  \text{ and }  Y\in K  \right.\\
& \hspace{7.5cm} \left. \text{ and } Y(\infty)=I \right\}.
\end{align*}
We then have:
\begin{thm}
The map $\mu: K^+\times K^-_0\to K$ defined for all $\xi\in\S$ by $\mu(Y,Z)(\xi)= \left(Z^-(\xi)\right)^{-1}\cdot Y^+(\xi)$ is a diffeomorphism from $K^+\times K^-_0$ onto an open dense subset $\tilde K$ of $K$.
\end{thm}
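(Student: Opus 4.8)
The plan is to establish the statement in four steps—smoothness, injectivity, the local diffeomorphism property, and density of the image—of which only the last calls for input beyond the tools already set up. First, $\mu$ is smooth as a map of Banach manifolds: taking the non-tangential boundary values $Y\mapsto Y^+$ and $Z\mapsto Z^-$ are bounded linear maps into $K=C^{0,\a}(\S,\GL_N(\C))$, inversion is smooth on the Banach--Lie group $K$, and pointwise multiplication $K\times K\to K$ is bounded bilinear; hence $\mu(Y,Z)=(Z^-)^{-1}Y^+$ is smooth. Injectivity is the uniqueness of the factorization and follows from a Liouville argument: if $\mu(Y_1,Z_1)=\mu(Y_2,Z_2)$ then $Z_2^-(Z_1^-)^{-1}=Y_2^+(Y_1^+)^{-1}$ on $\S$, where the left side is the boundary value of $Z_2Z_1^{-1}$, analytic on $\CD$, and the right side that of $Y_2Y_1^{-1}$, analytic on $\D$; as they agree on $\S$ they glue to a matrix function holomorphic on all of $\P$, hence constant, and since $Z_1(\infty)=Z_2(\infty)=\I_N$ this constant is $\I_N$, giving $Y_1=Y_2$ and $Z_1=Z_2$.

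For the local diffeomorphism property I would apply the inverse function theorem, so the point is that the differential $d\mu_{(Y,Z)}$ is a topological isomorphism at every point. Trivializing on the right, I write a tangent vector to $K^+$ as $\dot Y^+=Y^+a$ with $a$ in the space $\mathcal K^+$ of boundary values of $\MM_N(\C)$-valued functions analytic on $\D$, a tangent vector to $K^-_0$ as $\dot Z^-=Z^- b$ with $b$ in the space $\mathcal K^-_0$ of such functions analytic on $\CD$ and vanishing at $\infty$, while $T_GK=C^{0,\a}(\S,\MM_N(\C))$. A direct computation gives, for $G=\mu(Y,Z)$,
\[
d\mu_{(Y,Z)}(\dot Y,\dot Z)=G\,a-b\,G,\qquad\text{so}\qquad G^{-1}\,d\mu_{(Y,Z)}(\dot Y,\dot Z)=a-G^{-1}b\,G,
\]
and I must show $(a,b)\mapsto a-G^{-1}bG$ is an isomorphism $\mathcal K^+\oplus\mathcal K^-_0\to C^{0,\a}(\S,\MM_N(\C))$.

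The key is that this twisted splitting reduces to the untwisted Cauchy splitting. Using $G^{-1}=(Y^+)^{-1}Z^-$ and conjugating the equation $a-G^{-1}bG=c$ by $Y^+$, it becomes
\[
Y^+a(Y^+)^{-1}-Z^-b(Z^-)^{-1}=Y^+c\,(Y^+)^{-1}.
\]
Now $a\mapsto Y^+a(Y^+)^{-1}$ is a bounded automorphism of $\mathcal K^+$, and since $Z(\infty)=\I_N$ the map $b\mapsto Z^-b(Z^-)^{-1}$ is a bounded automorphism of $\mathcal K^-_0$, while conjugation by $Y^+$ is a bounded automorphism of $C^{0,\a}(\S,\MM_N(\C))$. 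Thus solving for $(a,b)$ is equivalent, after these isomorphisms, to writing an arbitrary element of $C^{0,\a}(\S,\MM_N(\C))$ as a difference of an element of $\mathcal K^+$ and one of $\mathcal K^-_0$, which is precisely the splitting $C_+-C_-=\mathbf 1$ of the Plemelj--Sokhotskii formula and is an isomorphism by boundedness of the Cauchy operators. Hence $d\mu_{(Y,Z)}$ is everywhere an isomorphism, $\mu$ is a local diffeomorphism and in particular an open map, so $\tilde K:=\mu(K^+\times K^-_0)$ is open and, with the injectivity above, $\mu$ is a diffeomorphism onto $\tilde K$.

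I expect density of $\tilde K$ to be the main obstacle, as it rests on the theory of partial indices rather than on soft arguments. Since $\det G=(\det Z^-)^{-1}\det Y^+$ with $\det Y^+$ nonvanishing on $\D$ and $\det Z^-$ nonvanishing on $\CD$, every $G\in\tilde K$ has $\text{Ind}_\S\det G=0$; so $\tilde K$ lies in the loops of zero global index and density is to be understood there. Within that subgroup $\tilde K$ is exactly the set of loops admitting a factorization with all partial indices zero, and to prove it dense I would invoke the Gohberg--Krein theory of factorization (see~\cite{PressleySegal},~\cite{LitvinchukSpitkovskii}): a system of partial indices is stable under small perturbations only if its entries differ by at most one, so any configuration of zero sum but with a nonzero entry can be perturbed into the all-zero one; equivalently the strata carrying a nonzero partial index are nowhere dense, and the complement of $\tilde K$ in the zero-index subgroup has empty interior.
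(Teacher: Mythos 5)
The first thing to know is that the paper contains \emph{no proof} of this theorem: it is quoted as a classical fact (the existence of a ``local Manin triple'', i.e.\ the Birkhoff/Gohberg--Krein factorization theorem), stated and then immediately used. So there is no internal argument to compare yours against, and your proposal must be judged on its own. On those terms it is correct, and it is assembled from exactly the tools the paper sets up around the statement: your injectivity step is the paper's own uniqueness argument in the zero-index case (Liouville applied to $ZY^{-1}$ after checking invertibility of the factors); your inverse-function-theorem step is precisely the ``infinitesimal version of the theorem'' that the paper alludes to at the start of Section 1.2, and the chain --- the computation $d\mu_{(Y,Z)}(\dot Y,\dot Z)=Ga-bG$, conjugation by $Y^+$ to untwist, and then the topological splitting $C^{0,\a}(\S,\MM_N(\C))=\mathcal K^+\oplus\mathcal K^-_0$ furnished by $C_+-C_-=\mathbf 1$ together with boundedness of the Cauchy operators on Hölder spaces --- is sound. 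Density is, as you say, the one genuinely deep ingredient, and citing Gohberg--Krein theory for it is legitimate; note, however, that your ``so'' hides the real content. Instability of an unbalanced index configuration only says that arbitrarily small perturbations can \emph{change} the partial indices, not that they can be driven to the all-zero configuration; what you need is the stronger Gohberg--Krein/Bojarski neighborhood statement (every neighborhood of a loop with unbalanced partial indices of zero sum contains loops with all partial indices zero, i.e.\ the unbalanced strata are nowhere dense). Since you are invoking the theory rather than reproving it, this is a citation imprecision, not a gap.

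One point where you genuinely sharpen the statement: since $\text{Ind}_\S\det G$ is continuous and integer-valued, it is locally constant on $K$, and it vanishes on $\tilde K$; hence $\tilde K$ cannot be dense in all of $K$, whose connected components are indexed by this winding number --- a small ball around $\mathrm{diag}(\xi,1,\dots,1)$ meets no loop of index zero. Density can only hold, and by Gohberg--Krein does hold, in the index-zero component. The theorem as printed (``dense subset of $K$'') is therefore literally too strong, and your reading is the right one; the paper implicitly concedes the point when it remarks that on $\tilde K$ the global index is zero.
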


We say that $(K,K^+,K^-_0)$ is a local Manin triple. In particular, on $\tilde K$, the global index is zero (in fact, all the partial indices are zero).

In the following, we will require more regularity for the objects we consider. We will actually deal with the following groups:
\begin{align*}
L &= C^2\left(\S, \GL_N(\C)\right) \\
L^+ &= K^+ \cap L\\
L^-_0 &= K^-_0 \cap L
\end{align*}
and we have a diffeomorphism from $L^+\times L^-_0$ onto the open dense subset $\tilde L=\mu\left( L^+\times L^-_0 \right)$.


\subsection{Matrix-ODE formulation}

The Birkhoff factorization is an inverse problem. The aim of this section is to introduce the appropriate class of ordinary differential systems which enable us to state the ``direct problem''  corresponding to the Birkhoff factorization we have described in the previous section --- in the same way that the class of Fuchsian systems is associated with the Riemann--Hilbert problem. This amounts to consider the infinitesimal version of the previous theorem. 

\bigskip

Let $G$ be a loop in $\tilde L$. Then there exists a unique solution $Y$ to the Birkhoff factorization of data $(\S,G)$ and we have $Y\big\vert _{\D}\in L^+$ and $Y\big\vert _{\CD}\in L^-_0$. Let $Y_\infty$ denote this unique solution and consider its logarithmic derivative
\begin{equation}
\A(x) = \frac{\dd }{\dd x}\left(Y_\infty(x)\right)\cdot Y_\infty(x)^{-1}. 
\label{def-A(x)}
\end{equation}
From condition (i), we see that the matrix-valued function $\A(x)$ is analytic  on $\D$ and $\CD$, and since the functions $Y_\infty^+$ and $Y_\infty^-$ are $C^2$ on the circle $\S$, the functions $\A\big\vert _{\D}$ and $\A\big\vert _{\CD}$ extend to the circle $\S$ into $C^1$ (and thus $C^{0,\a}$) functions. The function $\A(x)$ has then an explicit additive jump across $\S$, given by $G$ and $Y_\infty$ (Lemma~\ref{lemma-A}), which, thanks to the the Plemelj--Sokhotskii formula, provides us with an explicit expression for the operator $\A(x)$(Proposition~\ref{prop-A}).

\begin{lemma}
For all $\xi\in\S$ the two pointwise limits $\A^\pm(\xi)$ are related by
\[
\A^+(\xi)= \A^-(\xi) + A(\xi)
\]
where $A:\S\to \MM_N(\C)$ is the $C^1$-function defined by 
\begin{equation}
A(\xi)=Y_\infty^-(\xi) \frac{\dd G(\xi)}{\dd \xi} G(\xi)^{-1} \left(Y_\infty^-(\xi)\right)^{-1}.
\label{A-jump}
\end{equation}
\label{lemma-A}
\end{lemma}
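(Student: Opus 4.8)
The plan is to compute the jump of $\A(x)$ directly from its definition~\eqref{def-A(x)} as a logarithmic derivative, using the jump relation~\eqref{jump} for $Y_\infty$ itself. The key observation is that $\A = (\dd Y_\infty/\dd x)\, Y_\infty^{-1}$ is gauge-covariant data attached to $Y_\infty$, and the multiplicative jump $Y_\infty^+ = Y_\infty^- G$ should translate into an \emph{additive} jump for the logarithmic derivative, conjugated by $Y_\infty^-$ and picking up the derivative of the transition function $G$. This is the matrix analogue of the elementary fact that if $y^+ = y^- g$ then $(\log y)' $ jumps additively.

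First I would note that, since $Y_\infty\big|_{\D}\in L^+$ and $Y_\infty\big|_{\CD}\in L^-_0$, both one-sided limits $Y_\infty^{\pm}$ are $C^2$ on $\S$ and invertible there (by the index-zero determinant argument recalled in the previous section), so all the expressions below are well-defined $C^1$ matrix functions. The relation~\eqref{jump} $Y_\infty^+ = Y_\infty^- G$ holds pointwise on $\S$, and since it is an equality of $C^2$ functions of $\xi$, I may differentiate it tangentially along the circle. Writing $\A^\pm(\xi) = \bigl(\tfrac{\dd Y_\infty^\pm}{\dd\xi}\bigr)(Y_\infty^\pm)^{-1}$ for the boundary values of $\A$, I would differentiate $Y_\infty^+ = Y_\infty^- G$ by the Leibniz rule to get
\[
\frac{\dd Y_\infty^+}{\dd \xi} = \frac{\dd Y_\infty^-}{\dd \xi}\, G + Y_\infty^- \frac{\dd G}{\dd \xi},
\]
and then multiply on the right by $(Y_\infty^+)^{-1} = G^{-1}(Y_\infty^-)^{-1}$. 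The first term becomes $\A^-(\xi)$ after the $G\, G^{-1}$ cancels, and the second term becomes exactly $Y_\infty^- \tfrac{\dd G}{\dd\xi} G^{-1}(Y_\infty^-)^{-1} = A(\xi)$, yielding $\A^+ = \A^- + A$ as claimed.

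A point deserving care — and the only real subtlety — is the identification of the \emph{radial} pointwise limits $\A^\pm(\xi)$ (the non-tangential boundary values of the function $\A(x)$ defined on $\P\ssm\S$) with the \emph{tangential} logarithmic derivatives of the boundary functions $Y_\infty^\pm$ that appear after differentiating~\eqref{jump}. That the boundary value of the complex derivative $\dd Y_\infty/\dd x$ agrees with the derivative along $\S$ of the boundary trace $Y_\infty^\pm$ is exactly what the $C^1$-extension statement recorded just before the lemma provides: since $\A\big|_{\D}$ and $\A\big|_{\CD}$ extend continuously ($C^1$, hence $C^{0,\a}$) up to $\S$, the holomorphic derivative and the tangential derivative of the trace have matching boundary limits. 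With this regularity in hand the computation is routine and the formula~\eqref{A-jump} for $A$ follows; its $C^1$-regularity is immediate from the fact that $Y_\infty^-$, $G$, $\dd G/\dd\xi$ and their inverses are all at least $C^1$ on $\S$.
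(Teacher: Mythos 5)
Your proposal is correct and follows essentially the same route as the paper: differentiate the jump relation $Y_\infty^+ = Y_\infty^- G$ tangentially, multiply on the right by $(Y_\infty^+)^{-1} = G^{-1}(Y_\infty^-)^{-1}$, and invoke the identification of the boundary values of $\dd Y_\infty/\dd x$ with the tangential derivatives of the traces $Y_\infty^\pm$ (which the paper also isolates as the one point needing justification, via analyticity of $Y_\infty$ on $\mathbb{D}^\pm$ and differentiability of $Y_\infty^\pm$).
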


\begin{proof}
By definition and under the hypothesis
\[
\A^+(\xi) = \lim_{\substack{x\to\xi\\ x\in\D}} \left(\frac{\dd Y_\infty}{\dd x}\right) \cdot \left(Y_\infty^+(\xi)\right)^{-1}.
\]
Since ${Y_\infty}\big\vert _{\mathbb D^\pm}$ are analytic and $Y_\infty^\pm$ are differentiable, we can easily check that
\[
\lim_{\substack{x\to\xi\\ x\in\mathbb D^\pm}} \frac{\dd Y_\infty}{\dd x} = \frac{\dd \left(Y_\infty^\pm\right)}{\dd \xi} .
\]
But from~\eqref{jump}, we have $\frac{\dd Y_\infty^+}{\dd \xi} = \frac{\dd Y_\infty^-}{\dd \xi}G(\xi) + Y_\infty^-(\xi) \frac{\dd G}{\dd \xi} $, and thus
\begin{align*}
\A^+(\xi) &= \left(\frac{\dd Y_\infty^-}{\dd \xi}G(\xi) + Y_\infty^-(\xi) \frac{\dd G}{\dd \xi} \right) \cdot G(\xi)^{-1} \cdot Y_\infty^-(\xi)^{-1}\\
&= \A^-(\xi) + A(\xi).\qedhere
\end{align*}
\end{proof}

Thanks to the Plemelj--Sokhotskii formula, this enables us to obtain the following proposition.

\begin{prop}
If $G\in \tilde L$, then:
\begin{enumerate}
\item[(a)] there exists a $C^1$-function $A:\S\to \MM_N(\C)$ such that the function $\A(x)$ defined by~\eqref{def-A(x)} is given by
\begin{equation}
 \A(x) = \frac{1}{2i\pi} \oint_\S \frac{A(\xi)}{\xi-x}\dd \xi.
\label{A-Cauchy}
\end{equation}
In particular, $\A(x)$ is analytic  on $\D$ and $\CD$, and the functions $\A\big\vert _{\D}$ and $\A\big\vert _{\CD}$ extend onto $\overline \D$ and $\overline \CD$ respectively into  $C^1$-functions;
\item[(b)] moreover we have 
\[
\oint_\S A(\xi)\dd \xi =0 ,
\]
which is equivalent for the expansion of $\A(x)$ at infinity to satisfy
\begin{equation}
\A(x)=O\left(\frac{1}{x^2}\right) \qquad \text{as } x\to\infty.
\label{b}
\end{equation}
\end{enumerate}
\label{prop-A}
\end{prop}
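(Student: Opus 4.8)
The plan is to build, from the jump data $A$ of Lemma~\ref{lemma-A}, the Cauchy-type integral candidate and to show by a uniqueness argument that it must coincide with $\A$. First I would record that $A$ is genuinely a $C^1$-function on $\S$: in the expression \eqref{A-jump} the boundary value $Y_\infty^-$ lies in $L=C^2$, $G$ is $C^2$ so $\dd G/\dd\xi$ is $C^1$, and $G^{-1}$ is $C^2$, whence the product is $C^1$; in particular $A\in C^{0,\a}(\S)$, so the Plemelj--Sokhotskii machinery recalled above applies to it. Set
\[
\widetilde\A(x) = \frac{1}{2i\pi}\oint_\S \frac{A(\xi)}{\xi-x}\,\dd\xi,
\]
the right-hand side of \eqref{A-Cauchy}. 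Applied entrywise, the formula \eqref{F} shows that $\widetilde\A$ is analytic on $\D$ and on $\CD$, that $\widetilde\A(\infty)=0$, and --- by $C_+-C_-=\mathbf 1$ --- that its non-tangential boundary values satisfy $\widetilde\A^+-\widetilde\A^-=A$ on $\S$.

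Next I would compare $\widetilde\A$ with $\A$. By construction \eqref{def-A(x)}, $\A$ is analytic off $\S$, and the regularity already noted before Lemma~\ref{lemma-A} gives $C^1$ (hence $C^{0,\a}$) extensions of $\A\big\vert_\D$ and $\A\big\vert_\CD$ to the circle; Lemma~\ref{lemma-A} says precisely $\A^+-\A^-=A$. The crucial extra input is the behaviour at infinity: since $Y_\infty\big\vert_\CD\in L^-_0$ is analytic at $\infty$ with $Y_\infty(\infty)=\I_N$, its Laurent expansion there reads $Y_\infty(x)=\I_N+C_1/x+C_2/x^2+\cdots$, so that $\dd Y_\infty/\dd x=O(1/x^2)$ and $Y_\infty^{-1}=\I_N+O(1/x)$, giving $\A(x)=O(1/x^2)$ and in particular $\A(\infty)=0$. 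Thus $\A$ and $\widetilde\A$ are two functions analytic on $\P\ssm\S$, carrying the same additive jump $A$ across $\S$ and both vanishing at $\infty$. The uniqueness of the scalar additive boundary value problem $F^+-F^-=f$, $F(\infty)=0$ recalled in the Plemelj--Sokhotskii paragraph (applied to each matrix entry) forces $\A=\widetilde\A$, which is exactly \eqref{A-Cauchy} and proves~(a). Equivalently, the difference $\A-\widetilde\A$ has no jump, extends holomorphically across $\S$ to a function holomorphic on all of $\P$, and vanishes at $\infty$, hence is identically zero by Liouville's theorem.

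For (b) I would expand the representation \eqref{A-Cauchy} at infinity. Writing $1/(\xi-x)=-\sum_{k\geq 0}\xi^k/x^{k+1}$ for $|x|$ large yields
\[
\A(x) = -\frac{1}{2i\pi}\sum_{k\geq 0}\frac{1}{x^{k+1}}\oint_\S \xi^k A(\xi)\,\dd\xi,
\]
so the coefficient of $1/x$ equals $-\tfrac{1}{2i\pi}\oint_\S A(\xi)\,\dd\xi$. Hence $\oint_\S A\,\dd\xi=0$ if and only if this expansion starts at order $1/x^2$, i.e. $\A(x)=O(1/x^2)$, which is the asserted equivalence \eqref{b}; and the vanishing itself is then immediate, since the computation at infinity already carried out for (a) shows $\A(x)=O(1/x^2)$.

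I expect the only real subtlety to be the step that pins $\A$ down as the Cauchy integral: it is not enough that $\A$ and $\widetilde\A$ share the jump $A$, one must also control them at infinity, and the verification $\A(\infty)=0$ (coming from $Y_\infty(\infty)=\I_N$ together with the analyticity of $Y_\infty$ on $\CD$) is exactly what removes the residual additive constant and makes the uniqueness --- equivalently the Liouville argument --- applicable. Everything else is a routine consequence of the Plemelj--Sokhotskii formula and the $C^2$-regularity of the factors $Y_\infty^\pm$ and $G$.
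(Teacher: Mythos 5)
Your proof is correct and follows essentially the same route as the paper's: the paper likewise combines the jump identity of Lemma~\ref{lemma-A} with the Plemelj--Sokhotskii uniqueness of the additive boundary value problem (normalized by vanishing at $\infty$), observing that $Y_\infty(\infty)=\I_N$ and analyticity of $Y_\infty$ at infinity force $\A(x)=O(1/x^2)$, and then reads off $\oint_\S A(\xi)\,\dd\xi=0$ as the vanishing $1/x$-coefficient of the expansion of the Cauchy integral at infinity. You merely spell out the Liouville-type comparison of $\A$ with the Cauchy-integral candidate that the paper leaves implicit.
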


\begin{proof}
Let us observe that the behavior at infinity~\eqref{b} is a direct consequence of the fact that $Y_\infty$ is analytic at infinity. In particular, $\A(\infty)=0$, which ends to establish condition (a). We can then see that the coefficient of $1/x$ in the expansion of $\A(x)$ at infinity is
\[
\frac{-1}{2i\pi} \oint_\S A(\xi)\dd \xi\, ,
\]
which thus vanishes.
\end{proof}

The function $A(\xi)$ can be considered as the analogue of the residue of $\A(x)$ along the singular curve $\S$. Since it is conjugated to the logarithmic derivative of $G$ at every point of $\S$ (but of course not globally conjugated), we get:

\begin{cor} The determinant of $G$ satisfies the following scalar equation
\[
\frac{\dd}{\dd\xi}\det G(\xi) = \tr A(\xi) \det G(\xi) .
\]
\label{cor-A-G}
\end{cor}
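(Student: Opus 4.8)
The plan is to combine the explicit formula for $A(\xi)$ provided by Lemma~\ref{lemma-A} with two elementary facts: the invariance of the trace under conjugation, and Jacobi's formula for the derivative of a determinant. As the remark preceding the statement already points out, $A(\xi)$ is a pointwise conjugate of the logarithmic derivative of $G$, so the trace of $A$ should coincide with $\tr\left(\frac{\dd G}{\dd\xi}G^{-1}\right)$, and Jacobi's formula will then convert this trace into $\frac{\dd}{\dd\xi}\det G$ divided by $\det G$.

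Concretely, I would first start from~\eqref{A-jump}, which writes $A(\xi)=Y_\infty^-(\xi)\left(\frac{\dd G}{\dd\xi}(\xi)\,G(\xi)^{-1}\right)\left(Y_\infty^-(\xi)\right)^{-1}$ as a conjugate of $\frac{\dd G}{\dd\xi}G^{-1}$ by the invertible matrix $Y_\infty^-(\xi)$. Taking the trace and using cyclicity of the trace, the conjugating factors cancel and one gets
\[
\tr A(\xi) = \tr\left( \frac{\dd G}{\dd\xi}(\xi)\, G(\xi)^{-1}\right).
\]
I would then invoke Jacobi's formula, namely that for a $C^1$ invertible matrix-valued function $G$ one has $\frac{\dd}{\dd\xi}\det G(\xi)=\det G(\xi)\,\tr\left(G(\xi)^{-1}\frac{\dd G}{\dd\xi}(\xi)\right)$. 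Using cyclicity once more to rewrite $\tr\left(G^{-1}\frac{\dd G}{\dd\xi}\right)=\tr\left(\frac{\dd G}{\dd\xi}G^{-1}\right)$ and substituting the preceding identity, I obtain
\[
\frac{\dd}{\dd\xi}\det G(\xi) = \tr A(\xi)\,\det G(\xi),
\]
which is precisely the claimed scalar equation.

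Once the two ingredients are assembled the argument is immediate, so I do not expect a genuine obstacle; the only point deserving attention is the regularity required to differentiate $\det G$ pointwise and to apply Jacobi's formula. This is guaranteed by the standing hypotheses: since $G\in \tilde L\subset L=C^2(\S,\GL_N(\C))$, the loop $G$ is in particular $C^1$ and everywhere invertible on $\S$, and $A$ is the $C^1$-function furnished by Lemma~\ref{lemma-A}, so all expressions above are well defined and continuously differentiable along the circle.
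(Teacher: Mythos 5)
Your proof is correct and follows exactly the argument the paper intends: the remark preceding the corollary (``since it is conjugated to the logarithmic derivative of $G$\dots'') signals precisely the combination of the conjugation formula~\eqref{A-jump}, cyclicity of the trace, and Jacobi's formula that you spell out. The paper leaves this computation implicit, so your write-up is simply a fuller version of the same proof, including the appropriate regularity check.
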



\paragraph{The direct problem} From now on, we will work with differential systems satisfying conditions (a) and (b) above. We start with a $C^1$-function $A:\S\to \MM_N(\C)$ satisfying $\oint_\S A(\xi)\dd \xi =0$, and we define the linear ordinary differential system
\begin{equation}
\frac{\dd Y}{\dd x} = \A(x) Y, \qquad \text{with } \A(x) = \frac{1}{2i\pi} \oint_\S \frac{A(\xi)}{\xi-x}\dd \xi.
\label{A}\tag{$A$}
\end{equation}
A fundamental solution of~\eqref{A} is a piecewise analytic function $Y:\P\ssm\S\to\GL_N(\C)$. Such a solution admits pointwise limits $Y^+(\xi)$ and $Y^-(\xi)$ at every point $\xi\in\S$, such that $Y^+,Y^-$ belong to the group $L$. Notice that two fundamental solutions $Y$ and $Z$ are related by two invertible matrices $C_+, C_-\in\GL_N(\C)$ such that
\[
Z(x) = 
\begin{cases}
Y(x) C_+ \qquad \text{ on } \D\\
Y(x) C_- \qquad \text{ on } \CD.
\end{cases}
\]

We define the \emph{monodromy} $G_Y:\S\to\GL_N(\C)$ of a fundamental solution $Y$
\[
G_Y(\xi) = \left( Y^-(\xi)\right)^{-1}Y^+(\xi),
\]
and $G_Y\in L$. For every fixed $\xi\in\S$, the matrix $G_Y(\xi)$ can be seen as the connection matrix between the continuation of the function $Y\big\vert _{\D}$ through $\S$ at $\xi$ and the function $Y\big\vert _{\CD}$. The (non analytic) continuation makes sense because the limit $Y^+(\xi)$  is an invertible matrix which can be considered as the initial condition of a unique fundamental solution $Z_\xi$ defined on $\CD$, and we have $Z_\xi (x)= Y(x)G_Y(\xi)$ for $x\in\CD$.

If one considers two fundamental solutions $Y, Z:\P\ssm\S\to\GL_N(\C)$ as above, then their monodromies satisfy $G_Z(\xi)=C_-^{-1} G_Y(\xi) C_+$, and they are not conjugated to each other. 


\paragraph{Reduction to $\SL_N(\C)$-systems} We can always reduce the system to the case $\tr(A(\xi))=0$, which is equivalent to the fact that a monodromy representative $G_0$ satisfies $\det G_0(\xi)=1$.

Indeed, starting with a fundamental solution $Y$ of the system~\eqref{A}, we define for any holomorphic function $\Phi:\P\ssm\S\to\C$ the function $Z=\Phi Y$, which is then a fundamental solution of the differential system:
\[
\frac{\dd Z}{\dd x} = \mathcal B(x) Z, \qquad \text{with } \mathcal B(x)= \A(x)+\frac{\Phi'}{\Phi}\I_N.
\]
By choosing for $\Phi$ the solution of the scalar equation
\[
\frac{\dd \Phi}{\dd x} =-\frac 1 N \tr\left(\A(x) \right)\Phi,
\]
satisfying $\Phi(\infty)=1$, we obtain a system such that $\tr\left(\mathcal B(x) \right)=0$. Since we also have
\[
\mathcal B(x) = \frac{1}{2i\pi} \oint_\S \frac{B(\xi)}{\xi-x}\dd \xi,
\]
the $C^1$-function $B:\S\to \MM_N(\C)$ satisfies $\tr(B(\xi))=0$. Corollary~\ref{cor-A-G} then shows that the monodromy of any fundamental solution has constant determinant. One can easily see that there exists a fundamental solution whose monodromy belongs to $\SL_N(\C)$. As a direct consequence, the determinant of any fundamental solution is then piecewise constant. 

Notice that we actually do not need any $C^1$ regularity here, continuity is sufficient.


\paragraph{Series expansion} Let us consider the series expansions of the operator $\A(x)$ at $x=0$ and at $x=\infty$. We define the coefficients $B_m$ ($m\in\Z$) of the expansions as follows:
\begin{equation}
\begin{aligned}
\A(x) &= \sum_{m=0}^{+\infty}\frac{B_m}{x^{m+1}} & \ \text{on } \CD,\\
\A(x) &= -\sum_{m=1}^{+\infty} B_{-m} x^{m-1}  & \ \text{on } \D.
\end{aligned}
\label{A-expansion}
\end{equation}
From condition (b) in Proposition~\ref{prop-A}, we have $B_0=0$.

\begin{lemma}
For all $m\in\Z$ the coefficients $B_m$ are given by
\[
B_m =  \frac{-1}{2i\pi} \oint_\S \xi^mA(\xi)\dd \xi .
\]
They are the Fourier coefficients of the function $A:\S\to \MM_N(\C)$, which expands as
\[
A(\xi) = \A^+(\xi)-\A^-(\xi) = - \sum_{m\in\Z} \frac{B_m}{\xi^{m+1}}.
\]
\label{lemma-B_m}
\end{lemma}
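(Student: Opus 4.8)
The plan is to obtain both series expansions directly from the Cauchy integral~\eqref{A-Cauchy} by expanding the kernel $1/(\xi-x)$ as a geometric series in each domain, to read off the coefficients by comparison with~\eqref{A-expansion}, and finally to recognize the resulting integrals as Fourier coefficients.

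First I would treat the outer domain. For $x\in\CD$ one has $|\xi/x|<1$ for every $\xi\in\S$, so $\frac{1}{\xi-x}=-\frac1x\sum_{k\geq0}(\xi/x)^k=-\sum_{k\geq0}\xi^k/x^{k+1}$, the convergence being uniform in $\xi\in\S$ on each circle $|x|=r>1$. Since $A$ is continuous, hence bounded, on $\S$, I may integrate term by term, which gives $\A(x)=-\sum_{k\geq0}x^{-(k+1)}\frac{1}{2i\pi}\oint_\S\xi^kA(\xi)\dd\xi$ on $\CD$. Comparison with the first line of~\eqref{A-expansion} yields $B_m=\frac{-1}{2i\pi}\oint_\S\xi^mA(\xi)\dd\xi$ for $m\geq0$. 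Symmetrically, for $x\in\D$ one has $|x/\xi|<1$, so $\frac{1}{\xi-x}=\frac1\xi\sum_{k\geq0}(x/\xi)^k=\sum_{k\geq0}x^k/\xi^{k+1}$, again uniformly in $\xi$; term-by-term integration and comparison with the second line of~\eqref{A-expansion} give $B_{-m}=\frac{-1}{2i\pi}\oint_\S\xi^{-m}A(\xi)\dd\xi$ for $m\geq1$. The two computations together establish the claimed formula for every $m\in\Z$.

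For the Fourier interpretation I would parametrize $\xi=e^{i\theta}$, so that $\dd\xi=i\xi\,\dd\theta$ and $B_m=\frac{-1}{2\pi}\int_0^{2\pi}e^{i(m+1)\theta}A(e^{i\theta})\dd\theta$; thus $-B_m$ is exactly the $(-(m+1))$-th Fourier coefficient of $A$, and $\{B_m\}_{m\in\Z}$ runs over all Fourier coefficients of $A$. To recover the expansion of $A$ itself, I would take boundary values of the two series: by condition (a) the non-tangential limits $\A^+$ and $\A^-$ exist, and by the Plemelj--Sokhotskii formula together with Lemma~\ref{lemma-A} they satisfy $A=\A^+-\A^-$. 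Letting $x\to\xi$ termwise in the $\D$- and $\CD$-expansions gives $\A^+(\xi)=-\sum_{k\geq0}B_{-(k+1)}\xi^k$ and $\A^-(\xi)=\sum_{k\geq0}B_k\xi^{-(k+1)}$; subtracting, and reindexing the first sum by $n=-(k+1)\leq-1$, collects the two halves into $A(\xi)=-\sum_{n\in\Z}B_n/\xi^{n+1}$, as claimed.

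The only genuinely delicate point is the justification of the termwise passage to the boundary in the last step, since for merely continuous $A$ the two one-sided series need not converge on $\S$. Here the $C^1$-regularity of $A$ is what I would use: it forces the $B_m$ to decay like $O(1/|m|)$ and guarantees uniform convergence of the Fourier series of $A$ to $A$, so the rearrangement into the two one-sided series and the limit $x\to\xi$ are legitimate. Equivalently, one can bypass the boundary limits altogether: since $\{B_m\}$ are the Fourier coefficients of the $C^1$ function $A$, the series $-\sum_{m\in\Z}B_m\xi^{-(m+1)}$ is precisely the Fourier series of $A$ and therefore converges to $A(\xi)$ uniformly on $\S$.
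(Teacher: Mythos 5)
Your proof is correct and follows essentially the same route as the paper: expand the Cauchy kernel $1/(\xi-x)$ as a geometric series in each of $\D$ and $\CD$, integrate term by term, and compare with~\eqref{A-expansion} to identify $B_m = \frac{-1}{2i\pi}\oint_\S \xi^m A(\xi)\,\dd\xi$. The paper's proof stops there and leaves the Fourier-series interpretation implicit, whereas you additionally justify it (via the parametrization $\xi=e^{i\theta}$ and the uniform convergence of the Fourier series of the $C^1$ function $A$), which is a legitimate and careful completion of a step the paper takes for granted.
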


\begin{proof}
On $\CD$, we simply use the expansion
$
\frac{1}{\xi-x} = -\frac1x \sum_{m=0}^{+\infty} \left(\frac{\xi}{x}\right)^m
$
in the expression~\eqref{A-Cauchy} of $\A(x)$ and get
\[
\A(x) = -\sum_{m=0}^{+\infty} \frac{1}{x^{m+1}} \frac{1}{2i\pi} \oint_\S \xi^mA(\xi)\dd \xi.
\]
On $\D$, the argument is the same.
\end{proof}


\section{The action of Virasoro generators}

The unit circle $\S$ plays the role of singularity of the system~\eqref{A}, and the loop $G$ the one of monodromy. We want to consider the singular points lying on the circle as varying parameters, on which would depend the system~\eqref{A}, and in particular, its monodromy $G$. Our aim is to describe those variations under which the monodromy is preserved. Since we restrict ourselves to Birkhoff factorizations along the unit circle, the only possible variations of the singular locus $\S$ are described by diffeomorphisms of the circle. The strategy is then:
\begin{itemize}
\item assume the system~\eqref{A} depends on a parameter $\g$ laying on the group of diffeomorphisms of the circle ;
\item characterize those deformations of ~\eqref{A} by $\g$ for which the monodromy $G$, which now depends on $\g$, is preserved. We will say that the monodromy is \emph{preserved} when its dependence on $\g$ is a reparametrization by $\g$ ;
\item write the equations of such an isomonodromic deformation. This step requires a appropriate description of the tangent space of the group of diffeomorphisms of the circle.
\end{itemize}

\vspace{.2cm}
We consider the group $\Diff$ of the orientation preserving $C^2$-diffeo\-morphisms of the circle, equipped with the topology of uniform convergence of the mappings $\g:\S\to\S$ and their first and second derivatives. It is a Banach manifold modeled on the vector space $\Vect ^2(\S)$ of $C^2$-vector fields along the circle (see below). But $\Diff$ is not a Banach Lie group, since the structures of group and of Banach manifold are not compatible. The reason of our choice of $\Diff$ instead of the more usual group of \emph{smooth} diffeomorphisms of the circle, which is a Fréchet Lie group, is that we need to apply a Fröbenius type theorem to integrate the equations of deformation.

\bigskip
The group $\Diff$ acts on $L$ by composition (or reparametrization)
\[
 \Diff\times L \to L, \qquad  (\g,G) \mapsto G\circ\g^{-1}.
\]
Let us fix an initial loop $G_0\in L$, and denote by $G_\gamma:=G_0\circ\g^{-1}\in L$. This defines a function $G$
\[
G : \S\times\Diff \to \GL_N(\C),
\qquad  G(\xi,\gamma) = G_\gamma(\xi) = G_0\circ\gamma^{-1}(\xi).
\]
In particular, $G_0=G_e$ where $e$ is the identity $e=\id_\S$. Moreover, if a diffeomorphism $\g$ is close enough to the identity $e$, and if $G_0$ is an element of the subspace $\tilde L$, then $ G_0\circ\g^{-1} \in\tilde L$. The Birkhoff factorization then provides us with a unique $Y_\infty(x,\g)$ and thus a unique $\A(x,\g)$ of monodromy $G_\g$. Since, in general, the diffeomorphism $\g$ does not extend into a diffeomorphism of the Riemann sphere, the deformed operator $\A(x,\g)$  cannot be explicitly expressed in terms of the original one $\A_0(x)$. Let $\U$ be an open neighbourhood of the identity in $\Diff$, such that for any $\g\in\U$ we have $ G_\g \in\tilde L$ ($\U$ depends on $G_0$). The diffeomorphisms $\g\in\U$ are thus the parameter of the deformation described above. We will call such a deformation, obtained by reparametrizations of $G$, an \emph{isomonodromic deformation}.

\bigskip
To describe the dependence on $\g$, we need to describe the tangent space to $\Diff$. For  every $\g\in\Diff$, there is an isomorphism between the tangent space $T_\g\Diff$ of $\Diff$ at $\g$ and the vector space $\Vect ^2(\S)$, given by
\[
\Vect^2(\S) \to T_\g\Diff , \qquad V\mapsto V\circ\g \, .
\]
Virasoro generators, which can be intuitively written as
\[
 \xi^{m+1}\frac{\dd}{\dd\xi} \quad (m\in\Z)\, ,
\]
topologically generate the complexification of the space $\Vect^2(\S)$. To check this, one should consider Féjer sums instead of Fourier sums as it is usually the case when we consider smooth diffeomorphisms. Indeed, the Féjer sum of a continuous function on the circle is also finitely generated by the $\xi^m$, but uniformly converges to the function (and so on for the derivatives). Taking the images of Virasoro generators by the above isomorphism, we get the following complex vector fields $L_m$ on $\Diff$: 
\begin{equation}
 (L_m)_\g : \xi\mapsto \g(\xi)^{m+1} \in T^\C_\g\Diff 
\label{def-vir}
\end{equation}
($m\in\Z$). For every $\g\in\Diff$, the complex vector space generated by the vectors $(L_m)_\g$ is a dense sub-space of the complexified tangent space $T^\C_\g\Diff$ of $\Diff$ at $\g$. As vector fields on $\Diff$ , the $L_m$ satisfy the commutation relation of the Virasoro generators: $\left[L_m, L_n\right] = (n-m)L_{m+n}$.

For a function $\Phi$ on $\Diff$, we can thus define the Lie derivative of $\Phi$ along $L_m$ by $ \left(L_m\cdot \Phi\right)_\g=\frac{\dd}{\dd t}\Big\vert_{t=0} \Phi\circ\varphi_m(t,\g)$, where $\varphi_m(t,\g)$ is the flow of $L_m$ at $\g$. 

\bigskip
Let us point out the relation of the vectors fields $L_m$ with the operators $\tilde L_m$ given by~\eqref{vir-kor-samt} and introduced by Korotkin and Samtleben. Consider the singular set of the associated Fuchsian system: it is formed of the $p$-tuples of ordered points on the circle $\mathcal B_p = \{(t_1,\ldots,t_p)\in(\S)^p \ \vert \ \arg(t_1)<\cdots<\arg(t_p)<\arg(t_1)+2\pi \}$. Fix $p$ points on the circle, for instance $\tau_j=e^{2i\pi j/p}$ ($j=1,\ldots,p$), and define the map
\begin{align*}
T : \Diff & \to \mathcal B_p\\
 \g & \mapsto (\g(\tau_1),\ldots,\g(\tau_p))\, .
\end{align*}
For a function $f$ on $\Diff$ which admits a factorization $f=g\circ T$ by the map $T$ with $g$ a function on $\mathcal B_p$, we then have
\[
\left(L_m\cdot f\right)_\g = \sum_{j=1}^p \g(\tau_j)^{m+1}\frac{\partial g}{\partial t_j}  (\g(\tau_1),\ldots,\g(\tau_p))= \left(\tilde L_m \cdot g\right)_{T(\g)}.
\]


\section{Universal Schlesinger system}
\subsection{Definition of the deformation and main result}

We want to consider a family of linear differential systems~\eqref{A} depending on $\g\in\U$. Let us start with a function $A:\S\times\U\to\MM_N(\C)$ which is $C^1$ with respect to both $\xi\in\S$ and $\g\in\U$, and verifies $\oint_\S A(\xi,\g) \dd \xi =0$ for all $\g\in\U$. It defines the family of systems
\begin{equation}
\frac{\dd Y}{\dd x} = \A(x,\g) Y
\label{Ag}\tag{$A_\g$}
\end{equation}
where
\begin{equation}
 \A(x,\g) = \frac{1}{2i\pi} \oint_\S \frac{A(\xi,\g)}{\xi-x}\dd \xi =
 \begin{cases}
\displaystyle \quad \sum_{m\geq1}\dfrac{B_m(\g)}{x^{m+1}} &\text{if } \vert x\vert >1\\
\displaystyle -\sum_{m\geq1} B_{-m}(\g) x^{m-1}  &\text{if } \vert x\vert <1.
\end{cases}
\label{exp-Ag}
\end{equation}
The coefficients $B_m(\g)$ are $C^1$-functions on $\U$.

\begin{defn}
A fundamental solution $Y:\left(\P\ssm \S\right)\times\U\to\GL_N(\C)$ of $(A_\g)$ is said to be $M$-invariant if its monodromy loop $G:\S\times\U\to\GL_N(\C)$ satisfies
\begin{align*}
G(\xi,\g) &= G(\g^{-1}(\xi),e)\\
&=G_e\circ\g^{-1}(\xi).
\end{align*}

Moreover, the family of differential systems $\left(A_\g, \g\in\U\right)$ is said to be isomonodromic if it admits an $M$-invariant solution.
\end{defn}

The aim of the present section is to prove the following theorem.

\begin{thm}
The family of differential systems $\left(A_\g, \g\in\U\right)$ admits an $M$-invariant solution $Y(x,\g)$ satisfying $Y(\infty,\g)=\I_N$ if and only if the coefficients $B_n(\g) (n\in\Z)$ satisfy the following  non linear differential system:
\begin{equation}
\begin{split}
L_m\cdot B_n &= \ \sum_{k=0}^{n-1} \left[B_k,B_{m+n-k}\right] + nB_{m+n} \\
L_m\cdot B_{-n} &= -\sum_{k=-n}^{-1} \left[B_k,B_{m-n-k}\right] - nB_{m-n} 
\end{split}
\label{univ-sch}
\end{equation}
for all $m,n\in\Z$, with $ n\geq1$.
\label{thm-sch-univ}
\end{thm}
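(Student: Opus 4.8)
The plan is to prove both implications at once by producing the deformation equations $L_m\cdot Y=\Omega_m Y$ that accompany the ODE $(A_\g)$ and reading \eqref{univ-sch} off the compatibility of the resulting Pfaffian system. First I would record the infinitesimal form of $M$-invariance. Writing $\g_t:=\varphi_m(t,\g)$ for the flow of $L_m$, so that $\frac{\dd}{\dd t}\g_t(\eta)=\g_t(\eta)^{m+1}$, and differentiating $\g_t\circ\g_t^{-1}=\id$ implicitly, one gets
\[
\frac{\dd}{\dd t}\Big|_{t=0}\g_t^{-1}(\xi)=-\frac{\xi^{m+1}}{\g'(\g^{-1}(\xi))}.
\]
Differentiating $G(\xi,\g)=G_0\circ\g^{-1}(\xi)$ along the flow and using $(\g^{-1})'=1/\g'(\g^{-1})$ then yields
\[
(L_m\cdot G)_\g(\xi)=-\xi^{m+1}\frac{\dd G}{\dd\xi}(\xi,\g).
\]
Because the $L_m$ span a dense subspace of $T^\C_\g\Diff$ and $\U$ is connected, these identities for all $m$ are \emph{equivalent} to $M$-invariance; this is the characterisation announced in the introduction.

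For the forward direction, assume an $M$-invariant $Y$ with $Y(\infty,\g)=\I_N$ and set $\Omega_m:=(L_m\cdot Y)\,Y^{-1}$, which is analytic on $\D$ and $\CD$. Differentiating $Y^+=Y^-G$ along $L_m$ and inserting the identity above together with Lemma~\ref{lemma-A} (so that $A=Y^-\tfrac{\dd G}{\dd\xi}G^{-1}(Y^-)^{-1}$) gives the jump $\Omega_m^+-\Omega_m^-=-\xi^{m+1}A(\xi,\g)$. Since $Y(\infty)=\I_N$ is constant in $\g$ we have $\Omega_m(\infty,\g)=0$, so by the Plemelj--Sokhotskii formula and Liouville,
\[
\Omega_m(x,\g)=\frac{-1}{2i\pi}\oint_\S\frac{\xi^{m+1}A(\xi,\g)}{\xi-x}\dd\xi,
\]
with expansions $\Omega_m=-\sum_{k\geq m+1}B_k/x^{k-m}$ on $\CD$ and $\Omega_m=\sum_{k\leq m}B_k\,x^{m-k}$ on $\D$ (obtained exactly as in Lemma~\ref{lemma-B_m}). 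As $Y$ is analytic in $x$ and $C^1$ in $\g$, the mixed derivatives commute, giving the compatibility $L_m\cdot\A-\partial_x\Omega_m=[\Omega_m,\A]$. Substituting the two expansions and matching the coefficient of $x^{-(n+1)}$ on $\CD$, respectively of $x^{n-1}$ on $\D$ — using $B_0=0$ and the antisymmetry of the bracket to fold the commutator sums — produces exactly the two lines of \eqref{univ-sch}.

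For the converse I would define $\A$ and the $\Omega_m$ directly from the given $B_n(\g)$ by the same Cauchy integrals and expansions, and integrate the Pfaffian system. The coefficient matching above is reversible, so \eqref{univ-sch} is equivalent to the first integrability condition $L_m\cdot\A-\partial_x\Omega_m=[\Omega_m,\A]$. The second condition is the vanishing of $C_{mn}:=L_m\cdot\Omega_n-L_n\cdot\Omega_m-[\Omega_m,\Omega_n]-(n-m)\Omega_{m+n}$. Differentiating $C_{mn}$ in $x$, substituting $\partial_x\Omega_k=L_k\cdot\A-[\Omega_k,\A]$, and using $[L_m,L_n]=(n-m)L_{m+n}$ together with the Jacobi identity, every term collapses and one is left with the linear ODE
\[
\partial_x C_{mn}=[\A,C_{mn}].
\]
On $\CD$ one has $C_{mn}(\infty)=0$ since every $\Omega_k$ vanishes at infinity, and on $\D$ one has $C_{mn}(0)=L_m\cdot B_n-L_n\cdot B_m-[B_m,B_n]-(n-m)B_{m+n}$ (because $\Omega_k(0)=B_k$), which vanishes by \eqref{univ-sch}, the intermediate bracket terms cancelling pairwise. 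Hence $C_{mn}\equiv0$ on both domains. With both integrability conditions established, a Banach-manifold Frobenius theorem integrates the system on $\D$ and on $\CD$ to a unique $Y(x,\g)$ with $Y(\infty,\g)=\I_N$, and the jump $\Omega_m^+-\Omega_m^-=-\xi^{m+1}A$ reverses the computation of the second paragraph to give $L_m\cdot G_Y=-\xi^{m+1}\tfrac{\dd G_Y}{\dd\xi}$, i.e. $M$-invariance.

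The main obstacle is this reverse integration. The algebraic heart — the equivalence of coefficient matching with \eqref{univ-sch} and the collapse to $\partial_x C_{mn}=[\A,C_{mn}]$ — is routine once the expansions of $\Omega_m$ are in place. The genuinely delicate point is applying a Frobenius theorem to the \emph{infinite} family $\{L_m\}$, which only spans a \emph{dense} subspace of $T^\C\Diff$ and closes only up to the Virasoro relations; this is precisely why $\Diff$ is taken to be the $C^2$ Banach group rather than the Fréchet group of smooth diffeomorphisms. Making that Banach Frobenius argument rigorous, and verifying that the integrated $Y$ has $C^2$ boundary values on $\S$, is invertible on each of $\overline\D,\overline\CD$, and is therefore a genuine fundamental solution of $(A_\g)$, is where the functional-analytic care is concentrated.
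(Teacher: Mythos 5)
Your proof is correct, and its skeleton is the paper's own: the infinitesimal characterisation of $M$-invariance by $L_m\cdot G=-\xi^{m+1}\,\dd G/\dd\xi$ (the paper's Proposition~\ref{prop-composition} and Corollary~\ref{cor-def-isomono}); the matrices $\Omega_m=(L_m\cdot Y)Y^{-1}$, whose jump $-\xi^{m+1}A$ together with the Plemelj--Sokhotskii formula and $\Omega_m(\infty,\g)=0$ yields \eqref{omega-cauchy} and the expansions of Corollary~\ref{cor-omega-expansion}; the Pfaffian system \eqref{pfaff}; the equivalence of the first integrability condition \eqref{inte1} with \eqref{univ-sch} by matching coefficients at $0$ and $\infty$; and a Banach Frobenius theorem on $\U$ for the converse. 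The one place you genuinely diverge is in deriving the second integrability condition \eqref{inte2} from \eqref{univ-sch}. The paper does this by pure coefficient algebra: \eqref{inte2} is matched into the family of identities \eqref{inte2b}, which are then obtained by subtracting equation $(n,m+k)$ of the equivalent form \eqref{univ-sch-2} (Lemma~\ref{lemma-sym}) from equation $(m,n+k)$ of \eqref{univ-sch}. You instead form
\[
C_{mn}:=L_m\cdot\Omega_n-L_n\cdot\Omega_m-\left[\Omega_m,\Omega_n\right]-(n-m)\Omega_{m+n},
\]
prove $\partial_x C_{mn}=[\A,C_{mn}]$ from \eqref{inte1}, the Jacobi identity and $[L_m,L_n]=(n-m)L_{m+n}$, and kill $C_{mn}$ by a Liouville argument: it vanishes at $\infty$ on $\CD$, and at $0$ on $\D$ because $\Omega_k(0)=B_k$ and $L_m\cdot B_n-L_n\cdot B_m-[B_m,B_n]-(n-m)B_{m+n}=0$ under \eqref{univ-sch}. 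I checked both claims: the ODE identity does collapse exactly as you say, and the evaluation at $x=0$ vanishes in every sign configuration of $(m,n)$, the sums telescoping to the single bracket $[B_m,B_n]$ by the same pairwise cancellation that underlies Lemma~\ref{lemma-sym}. Your route trades the paper's infinite family of coefficient identities for one uniqueness statement about a linear ODE, confining the combinatorics to the single point $x=0$; this is arguably the cleaner argument. Its minor cost is regularity: $\partial_x C_{mn}=[\A,C_{mn}]$ uses second Lie derivatives $L_m\cdot(L_n\cdot\A)$ and the relation $[L_m,L_n]\cdot\A=(n-m)L_{m+n}\cdot\A$, whereas the paper's algebra stays first order in $\g$; under \eqref{univ-sch} the extra smoothness is essentially free (each $L_m\cdot B_n$ is a polynomial expression in the $C^1$ functions $B_k$, hence again $C^1$), but this deserves a sentence in a full write-up. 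Finally, you flag rather than carry out the Banach--Frobenius integration and the boundary regularity and invertibility of the integrated $Y$; the paper is exactly as terse on these points, so this is not a gap relative to its own standard.
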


After Korotkin and Samtleben, we call the above system~\eqref{univ-sch} the \emph{universal Schlesinger system}. Indeed, it contains all the finite dimensional $N\times N$ Schlesinger systems.

Before going into the proof of Theorem~\ref{thm-sch-univ}, let us establish an equivalent form of the universal Schlesinger system.

\begin{lemma}
We have the following equivalent form for the system~\eqref{univ-sch}: for all $m,n\in\Z$, with $ m\geq0$
\begin{equation}
\begin{split}
L_m\cdot B_n &= \ \sum_{k=0}^{m} \left[B_k,B_{m+n-k}\right] + nB_{m+n} \\
L_{-m}\cdot B_n &= -\sum_{k=-m+1}^{-1} \left[B_k,B_{-m+n-k}\right] + nB_{-m+n} 
\end{split}\label{univ-sch-2}
\end{equation}
where the sum is assumed to be zero whenever the subscript is larger than the upperscript.
\label{lemma-sym}
\end{lemma}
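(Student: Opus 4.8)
The plan is to prove the equivalence \emph{termwise}: for every pair $(m,n)\in\Z^2$ I will show that the right-hand side prescribed for $L_m\cdot B_n$ by \eqref{univ-sch} coincides, \emph{as an algebraic identity in the matrices $B_k$}, with the one prescribed by \eqref{univ-sch-2}. Since \eqref{univ-sch} supplies an equation for every Virasoro index $m$ and every nonzero subscript, while \eqref{univ-sch-2} supplies one for every $m$ and every subscript (the subscript-zero equations of \eqref{univ-sch-2} being vacuous, as $B_0=0$ forces both sides to vanish), this termwise matching yields the asserted equivalence. The whole argument rests on a single antisymmetry identity for the quadratic terms.

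The identity I would isolate first is: for any integers $a\le b$ with $a+b=m+n$, one has $\sum_{k=a}^{b}[B_k,B_{m+n-k}]=0$. Indeed the reflection $k\mapsto (m+n)-k$ is then an involution of $\{a,\dots,b\}$ (precisely because $a+b=m+n$), and it turns the summand $[B_k,B_{m+n-k}]$ into $[B_{m+n-k},B_k]=-[B_k,B_{m+n-k}]$; pairing $k$ with its image — the possible fixed point $k=(m+n)/2$ contributing $[B_{(m+n)/2},B_{(m+n)/2}]=0$ — makes the sum vanish. I will abbreviate $\Sigma(a,b)=\sum_{k=a}^{b}[B_k,B_{m+n-k}]$, an empty (hence zero) sum when $a>b$, and I will freely use $B_0=0$ (Proposition~\ref{prop-A}) to insert or delete the index $k=0$ in any such sum.

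Next I would put both systems in a common shape. Reading \eqref{univ-sch} as an equation for $L_m\cdot B_n$ with arbitrary subscript $n$ (its two lines being the cases $n\ge1$ and $n\le-1$), the quadratic part is $\Sigma(0,n-1)$ when $n\ge1$ and $-\Sigma(n,-1)$ when $n\le-1$; reading \eqref{univ-sch-2} as an equation for $L_m\cdot B_n$ with arbitrary Virasoro index $m$ (its two lines being $m\ge0$ and $m\le0$), the quadratic part is $\Sigma(0,m)$ when $m\ge0$ and $-\Sigma(m+1,-1)$ when $m\le0$. The linear terms equal $nB_{m+n}$ on both sides, so only the quadratic parts must be matched, and this splits into the four sign combinations of $(m,n)$. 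In each case, concatenating the adjacent ranges $\{\dots,-1\}$ and $\{0,\dots\}$ across $k=0$ (where $B_0=0$ removes any discrepancy), the equality to be checked reduces to the vanishing of one sum $\Sigma(a,b)$ with $a+b=m+n$: for instance $\Sigma(m+1,n-1)=0$ for $m\le0,\,n\ge1$, and $\Sigma(n,m)=0$ for $m\ge0,\,n\le-1$, while the two like-sign cases telescope to ranges of the same endpoint-sum. By the antisymmetry identity each such sum is $0$, so the two right-hand sides agree identically.

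I expect the only real difficulty to be bookkeeping: tracking summation bounds and overall signs across the four cases, and in particular the boundary value $m=0$, where the two lines of \eqref{univ-sch-2} overlap and their agreement is exactly what $B_0=0$ guarantees. Conceptually nothing is needed beyond the antisymmetry of the bracket under the reflection $k\mapsto (m+n)-k$, so once the antisymmetry lemma and the common-shape reduction are in place the conclusion is immediate.
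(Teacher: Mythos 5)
Your proposal is correct and takes essentially the same route as the paper: the reflection identity $\sum_{k=a}^{b}[B_k,B_{a+b-k}]=0$ is exactly the paper's opening observation, and the same four-way sign analysis reduces each case of the matching between~\eqref{univ-sch} and~\eqref{univ-sch-2} to the vanishing of one such symmetric-range sum. If anything, you are more explicit than the paper about the edge cases (the subscript-$0$ equations and the overlap of the two lines of~\eqref{univ-sch-2} at $m=0$), though note that at those spots the right-hand sides vanish by the reflection identity itself, with $B_0=0$ only needed for the left-hand side and the $\Sigma(0,0)$ term.
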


\begin{proof}
Let us first notice that for all $p,q\in\Z$, we have 
\[
\sum_{k=p}^q \left[B_k,B_{p+q-k}\right] = \sum_{k=p+1}^{q-1}\left[B_k,B_{p+q-k}\right]= 0
\]
since every bracket appears twice in each sum with opposite signs (up to a discussion on the parity of $q-p$). Let us fix $ m\geq0$ and $ n\geq1$. Thanks to the previous remark, by distinguishing the cases $n\leq m$ and $n> m$, we get
\[
\sum_{k=0}^{n-1} \left[B_k,B_{m+n-k}\right] = \sum_{k=0}^{m} \left[B_k,B_{m+n-k}\right]
\]
and the equations number $(m,n)$ in~\eqref{univ-sch} and in~\eqref{univ-sch-2} are equivalent. As for the equations number $(-m,-n)$. Now for the equations number $(-m,n)$, it suffices to see that the identity
\[
 \sum_{k=-m+1}^{n-1}\left[B_k,B_{-m+n-k}\right] =0
\]
implies 
\[
-\sum_{k=-m+1}^{-1} \left[B_k,B_{-m+n-k}\right] = \sum_{k=0}^{n-1} \left[B_k,B_{-m+n-k}\right] \,.
\]
The same argument holds for the equations number $(m,-n)$.
\end{proof}


\subsection{Equation of the deformation}

We are now going into the proof of Theorem~\ref{thm-sch-univ}. We first give a characterization of isomonodromic deformations in terms of Virasoro generators.

\begin{prop}
Let $\Phi$ be a $C^1$-function defined on $\S\times\Diff$. Let $\Phi_e:\S\to\C$ be the function $\Phi(\cdot,e)$.

The function $\Phi$ is a composition, that is to say $\Phi(\xi,\g)=\Phi_e\circ\g^{-1}(\xi)$, if and only if for all vector field $w$ on $\Diff$ the Lie derivative of $\Phi$ along $w$ is given by
\[
w\cdot \Phi (\xi,\g) = -w_\g\circ\g^{-1}(\xi)\frac{\dd \Phi}{\dd\xi}(\xi,\g)
\]
for all $(\xi,\g)\in\S\times\Diff$.
\label{prop-composition}
\end{prop}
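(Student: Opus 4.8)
The plan is to prove the two implications separately: the forward direction by a direct chain-rule computation, and the converse by the method of characteristics along paths in $\Diff$. First I would fix notation. For a vector field $w$ on $\Diff$, the value $w_\g\in T_\g\Diff$ is represented, through the flow $\g_t$ of $w$ with $\g_0=\g$, by the velocity function $w_\g=\frac{\dd}{\dd t}\big|_{t=0}\g_t:\S\to\C$, and composing with $\g^{-1}$ reparametrizes it to a field along the circle. Since the Lie derivative $(w\cdot\Phi)(\xi,\g)=\frac{\dd}{\dd t}\big|_{t=0}\Phi(\xi,\g_t)$ is a first-order directional derivative, it depends only on the tangent vector $w_\g$, so it suffices to handle one tangent vector at a time.

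For the forward implication I assume $\Phi(\xi,\g)=\Phi_e\circ\g^{-1}(\xi)$. Differentiating the identity $\g_t\circ\g_t^{-1}=\id$ in $t$ at $t=0$ gives $w_\g(\g^{-1}(\xi))+\g'(\g^{-1}(\xi))\,\frac{\dd}{\dd t}\big|_{t=0}\g_t^{-1}(\xi)=0$, hence $\frac{\dd}{\dd t}\big|_{t=0}\g_t^{-1}(\xi)=-w_\g(\g^{-1}(\xi))/\g'(\g^{-1}(\xi))$. Applying the chain rule to $\frac{\dd}{\dd t}\big|_{t=0}\Phi_e(\g_t^{-1}(\xi))$ and using the derivative-of-the-inverse formula $\frac{\dd\Phi}{\dd\xi}(\xi,\g)=\Phi_e'(\g^{-1}(\xi))/\g'(\g^{-1}(\xi))$, the two expressions coincide and produce exactly $w\cdot\Phi=-w_\g\circ\g^{-1}\frac{\dd\Phi}{\dd\xi}$.

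For the converse I would first note that both sides of the stated equation are linear in $\Phi$, and that $\Psi(\xi,\g):=\Phi_e\circ\g^{-1}(\xi)$ satisfies it by the forward direction. Thus $D:=\Phi-\Psi$ satisfies the homogeneous transport equation $w\cdot D=-w_\g\circ\g^{-1}\frac{\dd D}{\dd\xi}$ together with $D(\cdot,e)\equiv0$. Since $\Diff$ is connected, I join $e$ to an arbitrary $\g$ by a path $\g_t$ with $\g_0=e$, $\g_1=\g$, and consider, for a fixed $c\in\S$, the scalar function $t\mapsto D(\g_t(c),\g_t)$. Writing $\xi(t)=\g_t(c)$ and letting $w^{(t)}$ be the time-dependent velocity of the path, its derivative equals $\frac{\dd D}{\dd\xi}\big[\partial_t\g_t(c)-w^{(t)}_{\g_t}(\g_t^{-1}(\g_t(c)))\big]$; both terms in the bracket equal $w^{(t)}_{\g_t}(c)$, so the characteristics of the transport equation are precisely $\xi(t)=\g_t(c)$ and the function is constant in $t$. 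Evaluating at $t=0$ gives the value $D(c,e)=0$, whence $D(\g(c),\g)=0$ for every $c\in\S$; as $\g$ is a bijection of $\S$, this forces $D(\cdot,\g)\equiv0$, i.e. $\Phi=\Psi$.

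The main obstacle I anticipate is the rigorous execution of this characteristics argument in the Banach-manifold setting: one must ensure the connecting path and its velocity are regular enough to differentiate $D(\g_t(c),\g_t)$ (this is where the $C^2$-regularity of $\Diff$, rather than mere $C^1$, is convenient), that the derivative-of-inverse step is legitimate (which relies on $\g'$ never vanishing for a diffeomorphism), and that connectedness of the orientation-preserving group indeed lets one reach every $\g$ from $e$. The algebraic identity underlying both directions is elementary; the care lies entirely in the analytic bookkeeping.
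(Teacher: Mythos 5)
Your proposal is correct in its essentials, and its engine is the same chain-rule identity as the paper's, but the packaging differs and one step needs repair. The paper runs a single unified argument: $\Phi$ is a composition if and only if the function $\xi\mapsto\Phi(\g(\xi),\g)$, i.e. $\Phi_\g\circ\g$, is independent of $\g$, which (by connectedness of $\Diff$) holds if and only if $w\cdot(\Phi_\g\circ\g)=0$ for every vector field $w$; expanding this Lie derivative by the chain rule yields exactly the stated equation, and both implications fall out at once. Your converse --- constancy of $t\mapsto D(\g_t(c),\g_t)$ along a path from $e$ to $\g$ --- is precisely the integrated (method-of-characteristics) form of that computation, while your forward direction replaces it by an equivalent computation through the derivative-of-the-inverse formula. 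The paper's version buys economy: no flows, no differentiation of inverses, and connectedness is invoked once, abstractly; yours buys explicitness about where connectedness enters and a self-contained verification of the easy direction.

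The step that does not hold up as written is the reduction to $D=\Phi-\Psi$ with $\Psi(\xi,\g)=\Phi_e\circ\g^{-1}(\xi)$. To differentiate $D(\g_t(c),\g_t)$ by splitting into a $\xi$-part and a $\g$-part you need $D$, hence $\Psi$, to be \emph{jointly} $C^1$; but on $\Diff=\diff^2_+(\S)$ the inversion $\g\mapsto\g^{-1}$ is not differentiable (the paper emphasizes that $\Diff$ is not a Lie group: the group operations are not compatible with the Banach structure), and the forward direction only supplies directional derivatives of $\Psi$, which is not enough to justify the chain rule along a curve moving in both slots. The fix is to discard $\Psi$ and $D$ altogether and run your characteristics computation directly on $\Phi$: since $\Phi$ is jointly $C^1$ by hypothesis, $\frac{\dd}{\dd t}\Phi(\g_t(c),\g_t)=\frac{\dd\Phi}{\dd\xi}(\g_t(c),\g_t)\,\partial_t\g_t(c)+\bigl(w^{(t)}\cdot\Phi\bigr)(\g_t(c),\g_t)$, and the transport equation makes this vanish because $\partial_t\g_t(c)=w^{(t)}_{\g_t}(c)=w^{(t)}_{\g_t}\circ\g_t^{-1}(\g_t(c))$; hence $\Phi(\g(c),\g)=\Phi(c,e)=\Phi_e(c)$, which is the desired identity. (Note this constancy of $\Phi(\g_t(c),\g_t)$ is exactly the paper's statement that $\Phi_\g\circ\g$ is $\g$-independent.) One further point, shared with the paper and worth acknowledging: the hypothesis must be applied to vector fields realizing the path velocities, e.g. right-translated fields $\eta\mapsto V\circ\eta$ with $V\in\Vect^2(\S)$, which are of the same type as the paper's generators $L_m$.
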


Notice that for all $\g\in\Diff$ we have $w_\g\circ\g^{-1}\in\Vect^2(\S)$.

\begin{proof}
The fact that the function $\Phi$ is a composition $\Phi_\g=\Phi_e\circ \g^{-1}$ is equivalent for the function $\Phi_\g\circ\g:\S\to\C$ to be independent of $\g$, that is to say for its Lie derivative with respect to any vector field $w$ on $\Diff$ to vanish (since $\Diff$ is connected).  But we have
\begin{align*}
w \cdot \left(\Phi_\g\circ\g \right) (\xi)  
&=\left[w\cdot\left(\Phi(\g (\xi),\g)\right)\right]_\g\\
&= (w \cdot\g)_\g(\xi) \frac{\dd\Phi}{\dd\xi} (\g(\xi),\g)+ \left(w\cdot\Phi\right)_\g (\g(\xi),\g)\\
&=  w_\g(\xi)\frac{\dd\Phi}{\dd\xi} (\g(\xi),\g)+  \left(w\cdot\Phi\right)_\g (\g(\xi),\g)
\end{align*}
where the composition of derivatives in the second equality can be checked by using the flow of $w$ in $\Diff$. The function $\Phi$ is thus a composition if and only if
\[
\left(w\cdot\Phi\right)_\g (\g(\xi),\g) = - w_\g(\xi)\frac{\dd\Phi}{\dd\xi} (\g(\xi),\g)
\]
that is to say, by composing in $\xi$ by $\g^{-1}$:
\[
\left(w\cdot\Phi\right)_\g (\xi,\g) = -w_\g\circ\g^{-1}(\xi)\frac{\dd \Phi}{\dd\xi}(\xi,\g).
\]
\end{proof}

Since for all $\g$, the vectors $\left(L_m\right)_\g$ defined by~\eqref{def-vir} generate the tangent space $T_\g\Diff$, we can deduce the following characterization of isomonodromic deformations.
\begin{cor}
A fundamental solution of the family of systems $\left(A_\g, \g\in\U\right)$ is $M$-invariant if and only if its monodromy satisfies for all $(\xi,\g)\in\S\times\U$
\[
L_m\cdot G (\xi,\g) = -\xi^{m+1}\frac{\dd G}{\dd\xi} (\xi,\g).
\]
\label{cor-def-isomono}
\end{cor}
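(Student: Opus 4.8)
The plan is to obtain Corollary~\ref{cor-def-isomono} as a direct specialization of Proposition~\ref{prop-composition} to the Virasoro vector fields $L_m$, supplemented by the density of the family $\{(L_m)_\g\}$ in the complexified tangent space $T^\C_\g\Diff$. First I would recall that, by the Definition preceding Theorem~\ref{thm-sch-univ}, a fundamental solution $Y$ is $M$-invariant precisely when its monodromy $G$ is a composition, namely $G(\xi,\g)=G_e\circ\g^{-1}(\xi)$. Thus $M$-invariance of $Y$ is equivalent to $G$ satisfying the hypothesis of Proposition~\ref{prop-composition}, applied entrywise to the matrix-valued loop $G$. By that proposition, $G$ is a composition if and only if, for every vector field $w$ on $\Diff$,
\[
w\cdot G(\xi,\g) = -\,w_\g\circ\g^{-1}(\xi)\,\frac{\dd G}{\dd\xi}(\xi,\g).
\]

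Next I would evaluate the right-hand side for $w=L_m$. Since $(L_m)_\g(\xi)=\g(\xi)^{m+1}$ by~\eqref{def-vir}, we compute $(L_m)_\g\circ\g^{-1}(\xi)=\g\big(\g^{-1}(\xi)\big)^{m+1}=\xi^{m+1}$, so the identity above for $w=L_m$ reads exactly
\[
L_m\cdot G(\xi,\g) = -\,\xi^{m+1}\frac{\dd G}{\dd\xi}(\xi,\g),
\]
which is the stated equation. This already settles the forward implication: if $Y$ is $M$-invariant, Proposition~\ref{prop-composition} gives the displayed identity for every $w$, in particular for $w=L_m$, which is the corollary's equation.

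For the converse I would use that, for fixed $(\xi,\g)$, both sides of the proposition's identity depend $\C$-linearly and continuously on the value $w_\g\in T^\C_\g\Diff$: the Lie derivative at $\g$ is a directional derivative and hence linear in $w_\g$, while the right-hand side is manifestly linear in $w_\g$. Assuming the corollary's equation for all $m$ is precisely the proposition's identity for $w_\g$ ranging over the vectors $(L_m)_\g$, which span a dense subspace of $T^\C_\g\Diff$. By $\C$-linearity and continuity the identity then extends to all of $T^\C_\g\Diff$, in particular to $T_\g\Diff$, hence holds for every vector field $w$. Proposition~\ref{prop-composition} then yields that $G$ is a composition, i.e. $Y$ is $M$-invariant.

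The one step I would take care to justify rather than merely assert is this density argument in the converse direction: one must check that the two sides are continuous linear functionals of $w_\g$, so that validity on the dense span of $\{(L_m)_\g\}$ propagates to the whole complexified tangent space. This is the only genuine subtlety; everything else is a bookkeeping specialization of the preceding proposition and the explicit form~\eqref{def-vir} of the $L_m$.
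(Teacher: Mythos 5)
Your proof is correct and takes essentially the same route as the paper: the corollary is deduced from Proposition~\ref{prop-composition} together with the fact that the vectors $(L_m)_\g$ span a dense subspace of $T^\C_\g\Diff$, via the computation $(L_m)_\g\circ\g^{-1}(\xi)=\xi^{m+1}$. Your careful handling of the converse (linearity and continuity of both sides in $w_\g$, so that validity on the dense span propagates to the whole tangent space) is precisely the argument the paper leaves implicit in its one-line deduction.
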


From now on, the strategy to establish the universal Schlesinger system is quite similar to the one to establish the finite dimensional one: we exhibit a Pfaffian system which describes isomonodromic deformations of system~\eqref{Ag} and whose integrability condition is equivalent to the universal Schlesinger system.

\begin{prop} Let $Y(x,\g)$ be a fundamental solution such that $Y(\infty,\g)=\I_N$. For all $m\in\Z$, define $\Omega_m:\left(\P\ssm\S\right)\times\U\to\MM_N(\C)$ to be the function
\[
\Omega_m(x,\g) := \left(L_m\cdot Y(x,\g)\right)\cdot Y(x,\g)^{-1}.
\]
Then, $Y(x,\g)$ is $M$-invariant if and only if the functions $\Omega_m$ are given by
\begin{equation}
\Omega_m(x,\g) = \frac{-1}{2i\pi} \oint_\S \frac{\xi^{m+1}A(\xi,\g)}{\xi-x}\dd \xi.
\label{omega-cauchy}
\end{equation}
\end{prop}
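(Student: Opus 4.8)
The plan is to identify the two sides of~\eqref{omega-cauchy} by the same uniqueness principle already used for the Birkhoff factorization: I will show that $\Omega_m$ and the Cauchy-type integral on the right are both analytic on $\D$ and on $\CD$, that they share the same additive jump across $\S$, and that they agree at $\infty$; a Liouville argument on the sphere then forces them to coincide.

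First I would record the jump of $\Omega_m$ for an \emph{arbitrary} fundamental solution, independently of $M$-invariance. Since $Y$ is analytic in $x$ on each domain and $C^1$ in $\g$, and $L_m$ is a first-order differential operator in $\g$, the function $\Omega_m=(L_m\cdot Y)Y^{-1}$ is again analytic in $x$ on $\D$ and $\CD$, and the Lie derivative commutes with the non-tangential boundary limits, so $\Omega_m^\pm=(L_m\cdot Y^\pm)(Y^\pm)^{-1}$. Differentiating the jump relation $Y^+=Y^-G$ along $L_m$ gives $L_m\cdot Y^+=(L_m\cdot Y^-)G+Y^-(L_m\cdot G)$, and a one-line computation then yields
\[
\Omega_m^+-\Omega_m^-=Y^-(L_m\cdot G)G^{-1}(Y^-)^{-1}.
\]
This identity is the hinge of both implications.

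For the forward direction, I would invoke Corollary~\ref{cor-def-isomono}: $M$-invariance gives $L_m\cdot G=-\xi^{m+1}\frac{\dd G}{\dd\xi}$ for every $m$. Substituting this into the jump formula and recognizing $Y^-\frac{\dd G}{\dd\xi}G^{-1}(Y^-)^{-1}=A(\xi,\g)$ from Lemma~\ref{lemma-A}, I obtain $\Omega_m^+-\Omega_m^-=-\xi^{m+1}A(\xi,\g)$. On the other hand, the integral in~\eqref{omega-cauchy} is analytic on $\D$ and $\CD$, vanishes at $\infty$, and, since $\xi\mapsto-\xi^{m+1}A(\xi,\g)$ is $C^1$ hence Hölder, it has exactly this same jump by the Plemelj--Sokhotskii formula $C_+-C_-=\mathbf 1$. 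Hence the difference between $\Omega_m$ and the integral extends holomorphically across $\S$ to all of $\P$. Finally $Y(\infty,\g)=\I_N$ for all $\g$ forces $L_m\cdot Y(\infty,\g)=0$, so $\Omega_m(\infty)=0$, matching the integral; the difference is therefore holomorphic on the whole Riemann sphere and vanishes at $\infty$, hence is identically $0$, which is~\eqref{omega-cauchy}.

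The converse runs the same computation backwards: if $\Omega_m$ is given by~\eqref{omega-cauchy} for every $m$, then Plemelj--Sokhotskii makes its jump equal to $-\xi^{m+1}A(\xi,\g)$; comparing with the general jump formula and cancelling the conjugating factors $Y^-$ and $(Y^-)^{-1}$ leaves $L_m\cdot G=-\xi^{m+1}\frac{\dd G}{\dd\xi}$ for all $m$, which is precisely the characterization of $M$-invariance in Corollary~\ref{cor-def-isomono}. The step I expect to require the most care is not the algebra but the regularity bookkeeping of the second paragraph: justifying that $\Omega_m$ is analytic in $x$ on each domain and extends $C^1$ to $\S$, and that $L_m$ may be interchanged with the boundary limit so that $\Omega_m^\pm=(L_m\cdot Y^\pm)(Y^\pm)^{-1}$. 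This rests on $Y$ being analytic in $x$, differentiable in $\g$, and on these operations commuting up to the boundary, which should follow from the Birkhoff-factorization construction of $Y$ and the $C^1$-dependence of $A(\cdot,\g)$, but deserves to be spelled out.
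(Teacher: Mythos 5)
Your proof is correct and follows essentially the same route as the paper: computing $\Omega_m^\pm=(L_m\cdot Y^\pm)(Y^\pm)^{-1}$, deriving the additive jump $\Omega_m^+-\Omega_m^-=Y^-(L_m\cdot G)G^{-1}(Y^-)^{-1}$ from $Y^+=Y^-G$, and then combining the Plemelj--Sokhotskii formula with Lemma~\ref{lemma-A} and Corollary~\ref{cor-def-isomono}. The only difference is organizational --- the paper writes $\Omega_m$ once and for all as the Cauchy integral of its jump and reduces the equivalence to $\omega_m=-\xi^{m+1}A$, whereas you split the two directions and prove the forward one by a Liouville argument, which is exactly the uniqueness statement underlying the paper's appeal to Plemelj--Sokhotskii.
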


In particular, $\Omega_{-1}(x,\g)=-\A(x,\g)$.

\begin{proof}
By definition, the function $\Omega_m(\cdot, \g)$ is well defined and analytic on the domains $\D$ and $\CD$, and verifies $\Omega_m(\infty, \g)=0$. Moreover it admits pointwise limits at any $\xi\in\S$ from the plus and minus side of $\S$, which satisfy
\[
\Omega_m^+(\xi, \g) =  \lim_{\substack{x\to\xi\\ x\in\mathbb D^+}} \left(L_m\cdot Y(x,\g)\right) \cdot Y^+(\xi,\g)^{-1} = \left(L_m\cdot Y^+(\xi,\g)\right)\cdot Y^+(\xi,\g)^{-1}.
\]
Thus on $\S\times\U$
\[
\Omega_m^+ = \left(L_m\cdot \left(Y^-G\right)\right) G^{-1} \left(Y^-\right)^{-1}= \Omega_m^- + Y^- L_m\left(G\right)G^{-1} \left(Y^-\right)^{-1}.
\]
Thanks to the Plemelj--Sokhotskii formula, we get
\[
\Omega_m(x,\g) = \frac{1}{2i\pi} \oint_\S \frac{\omega_m(\xi,\g)}{\xi-x}\dd \xi \, ,
\]
where $\omega_m=Y^- L_m\left(G\right)G^{-1} \left(Y^-\right)^{-1}$. But from~\eqref{A-jump} and from Corollary~\ref{cor-def-isomono}, we see that the fundamental solution $Y(x,\g)$ is $M$-invariant if and only if $\omega_m(\xi,\g)= -\xi^{m+1}A(\xi,\g)$.
\end{proof}

We can then deduce from~\eqref{omega-cauchy} the series expansions of the functions $\Omega_m(\cdot, \g)$.

\begin{cor}
For all $\g\in\U$ the expression~\eqref{omega-cauchy} of the functions $\Omega_m(\cdot, \g)$ is equivalent to 
\begin{align*}
\Omega_m(x,\g) &= -\sum_{n\geq1}\frac{B_{m+n}(\g)}{x^n} &\text{if } \vert x\vert >1,\\
\Omega_m(x,\g) &= \sum_{n\geq0} B_{m-n}(\g) x^n &\text{if } \vert x\vert <1.
\end{align*} 
\label{cor-omega-expansion}
\end{cor}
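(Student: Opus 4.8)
The plan is to read off the two expansions directly from the Cauchy representation \eqref{omega-cauchy}, exactly as in the proof of Lemma~\ref{lemma-B_m}, and then to obtain the converse direction from the uniqueness of such expansions.

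First I would expand the Cauchy kernel in a geometric series on each domain. On $\CD$, where $|x|>1=|\xi|$, one writes $\frac{1}{\xi-x} = -\frac1x\sum_{k\geq0}(\xi/x)^k$, a series converging uniformly in $\xi\in\S$ since $|\xi/x|<1$ is bounded away from $1$; inserting this into \eqref{omega-cauchy} and integrating term by term gives $\Omega_m(x,\g) = \sum_{k\geq0} x^{-(k+1)}\,\frac{1}{2i\pi}\oint_\S \xi^{m+1+k}A(\xi,\g)\,\dd\xi$. By Lemma~\ref{lemma-B_m} (now carrying the parameter $\g$) the integral equals $-B_{m+1+k}(\g)$, so after setting $n=k+1$ I recover $\Omega_m(x,\g)=-\sum_{n\geq1}B_{m+n}(\g)/x^n$. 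On $\D$, where $|x|<1$, the corresponding expansion is $\frac{1}{\xi-x}=\sum_{k\geq0} x^k \xi^{-(k+1)}$, again uniformly convergent on $\S$; the same term-by-term integration together with Lemma~\ref{lemma-B_m} yields $\Omega_m(x,\g)=\sum_{n\geq0}B_{m-n}(\g)x^n$ after setting $n=k$.

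For the converse direction I would invoke uniqueness rather than a second computation. By construction $\Omega_m(\cdot,\g)$ is analytic on $\D$ and on $\CD$ with $\Omega_m(\infty,\g)=0$, hence its Laurent expansion at $\infty$ and its Taylor expansion at $0$ are uniquely determined. If one conversely assumes that $\Omega_m$ admits the two stated expansions, then matching coefficients against Lemma~\ref{lemma-B_m} forces its additive jump across $\S$ to be $-\xi^{m+1}A(\xi,\g)$, and the Plemelj--Sokhotskii formula reconstructs the Cauchy integral \eqref{omega-cauchy} as the unique piecewise-analytic function with that jump vanishing at infinity. This closes the equivalence.

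I do not expect a genuine obstacle here, the statement being a direct corollary of \eqref{omega-cauchy}. The only points requiring care are the justification of the term-by-term integration --- guaranteed by the uniform convergence of the geometric series on the compact circle, where $|\xi|=1$ stays bounded away from $|x|\neq1$ --- and, for the reverse implication, the appeal to the uniqueness of the analytic decomposition.
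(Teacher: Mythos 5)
Your proof is correct and follows the same route the paper takes (implicitly, by the remark that the corollary is deduced from \eqref{omega-cauchy} exactly as in Lemma~\ref{lemma-B_m}): expand the Cauchy kernel in a geometric series on each side of $\S$, integrate term by term, and identify $\frac{1}{2i\pi}\oint_\S\xi^{m+1+k}A(\xi,\g)\,\dd\xi$ with $-B_{m+1+k}(\g)$. For the converse you do not need the detour through jumps and the Plemelj--Sokhotskii formula: since $\Omega_m(\cdot,\g)$ is analytic on $\D$ and on $\CD$ with $\Omega_m(\infty,\g)=0$, the uniqueness of Taylor (resp.\ Laurent) coefficients already identifies it with the Cauchy integral, whose expansions are those just computed --- but your argument is sound.
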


The family of differential systems $\left(A_\g, \g\in\U\right)$ is thus isomonodromic if and only if the following infinite dimensional Pfaffian system on $\left(\P\ssm\S\right)\times\U$
\begin{equation}
 \left\{
 \begin{aligned}
&\frac{\dd Y}{\dd x} = \A(x,\g) Y\\
&L_m\cdot Y = \Omega_m(x,\g) Y \qquad (m\in\Z)
 \end{aligned}
 \right.
\label{pfaff}
\end{equation}
where $\Omega_m(x,\g)$ is given by~\eqref{omega-cauchy}, is completely integrable. The integrability condition is given by the following Lemma.

\begin{lemma}
The Pfaffian system~\eqref{pfaff} is integrable on $\left(\P\ssm\S\right)\times\U$ if and only if:
\begin{align}
L_m\cdot \A -  \frac{\dd \Omega_m}{\dd x}  &= \left[ \Omega_m,\A\right] & (m\in\Z)\label{inte1}\\
L_m\cdot \Omega_n - L_n\cdot \Omega_m& =   \left[ \Omega_m,\Omega_n\right]  + (n-m)\Omega_{m+n} & (m,n\in\Z).\label{inte2}
\end{align} 
\end{lemma}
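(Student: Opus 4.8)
The plan is to read the Pfaffian system \eqref{pfaff} as the horizontality equations for a connection on the trivial $\GL_N(\C)$-bundle over $\left(\P\ssm\S\right)\times\U$, whose connection coefficients are $\A(x,\g)$ in the $x$-direction and the $\Omega_m(x,\g)$ in the directions of the vector fields $L_m$. By a Fröbenius-type theorem in the Banach setting --- which is precisely why $\Diff$ was chosen to be a Banach manifold rather than the Fréchet group of smooth diffeomorphisms --- such a system is completely integrable if and only if the curvature of this connection vanishes, that is, if and only if the defining equations are pairwise compatible. The whole argument thus reduces to computing these compatibility conditions for each pair of independent directions, of which there are two kinds; read forwards they are necessary (differentiate a solution), read backwards they are what the Fröbenius theorem consumes to produce solutions.

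First I would treat the pair formed by $\partial_x$ and $L_m$. Since $x$ and $\g$ are independent variables, the corresponding derivations commute, so any solution $Y$ satisfies $L_m\cdot(\partial_x Y)=\partial_x(L_m\cdot Y)$. Expanding both sides with the Leibniz rule and substituting the two equations of \eqref{pfaff} gives
\[
(L_m\cdot\A)\,Y+\A\,\Omega_m\,Y=\left(\frac{\dd\Omega_m}{\dd x}\right)Y+\Omega_m\,\A\,Y,
\]
and cancelling the invertible factor $Y$ on the right yields $L_m\cdot\A-\frac{\dd\Omega_m}{\dd x}=[\Omega_m,\A]$, which is exactly \eqref{inte1}.

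Next I would treat the pair $L_m,L_n$, now using the Virasoro bracket $[L_m,L_n]=(n-m)L_{m+n}$ established in Section 2. I compute $L_m\cdot(L_n\cdot Y)-L_n\cdot(L_m\cdot Y)$ in two ways: on one hand it equals $[L_m,L_n]\cdot Y=(n-m)\,\Omega_{m+n}\,Y$; on the other, the Leibniz rule produces $(L_m\cdot\Omega_n-L_n\cdot\Omega_m)\,Y+(\Omega_n\Omega_m-\Omega_m\Omega_n)\,Y$. Equating these and cancelling $Y$ gives $L_m\cdot\Omega_n-L_n\cdot\Omega_m=[\Omega_m,\Omega_n]+(n-m)\Omega_{m+n}$, which is \eqref{inte2}.

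The main obstacle is not this algebra but the justification of the Fröbenius step in infinite dimensions. The fields $L_m$ are countably many and, at each $\g$, span only a dense subspace of the complexified tangent space $T^\C_\g\Diff$; one must argue that verifying the curvature conditions on this generating family suffices to kill the curvature on the whole Banach tangent space. This is where the $C^2$ regularity and the choice of $\Diff$ as a Banach manifold are essential: the connection coefficients are $C^1$ in $\g$, so the curvature is continuous, and its vanishing on the dense span $\Vect^2(\S)$ of the $L_m$ extends by continuity to all of $T^\C_\g\Diff$, allowing the Banach Fröbenius theorem to apply and deliver the $M$-invariant solution.
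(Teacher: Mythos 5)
Your proposal is correct and follows essentially the same route as the paper's own proof: the necessity of \eqref{inte1} and \eqref{inte2} is obtained by applying the commutation relations $\left[\partial_x, L_m\right]=0$ and $\left[L_m,L_n\right]=(n-m)L_{m+n}$ to a solution $Y$ and cancelling the invertible factor $Y$, while sufficiency is deduced from a Fr\"obenius theorem valid on the Banach manifold $\left(\P\ssm\S\right)\times\U$. Your closing remark on why vanishing of the curvature on the dense span of the $(L_m)_\g$ extends by continuity to all of $T^\C_\g\Diff$ is a genuine subtlety that the paper leaves implicit, and addressing it strengthens rather than alters the argument.
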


\begin{proof}
The commutation relations among the vector fields $\partial_x$ and $L_m$ are given by
\[
\left[\partial_x, L_m\right] = 0 , \qquad \left[L_m, L_n\right] = (n-m)L_{m+n} \qquad (m, n\in\Z).
\]
On integral submanifolds of the Pfaffian system~\eqref{pfaff}, we thus have
\begin{itemize}
\item $\left[\partial_x, L_m\right] Y= 0$ implies $
0 =\frac{\dd \Omega_m}{\dd x} - L_m\cdot \A + \left[ \Omega_m,\A\right] $,
\item and $\left[L_m, L_n\right] Y = (n-m)L_{m+n}\cdot Y$ gives
\[
L_m\cdot\left(\Omega_nY\right)-L_n\cdot\left(\Omega_mY\right) = (n-m)\Omega_{m+n}Y
\]
which is equivalent to~\eqref{inte2}.
\end{itemize}
Conversely, since the group $\Diff$ is a Banach manifold, we can apply Fröbenius theorem on $\left(\P\ssm\S\right)\times\U$, which asserts that the conditions~\eqref{inte1} and \eqref{inte2} guarantee the existence of a solution $Y(x,\g)$  to the Pfaffian system~\eqref{pfaff}  such that $Y(\infty,e)=\I_N$. Notice that $Y$ is not unique since $\left(\P\ssm\S\right)\times\U$ is not connected. Since $\Omega_m(\infty, \g)=0$, we then have $Y(\infty,\g)=\I_N$ for all $\g\in\U$.
\end{proof}

\begin{proof}[Proof of Theorem~\ref{thm-sch-univ}]
The last part that remains to be proven is that the integrability condition~\eqref{inte1}-\eqref{inte2} is equivalent to the universal Schlesinger system~\eqref{univ-sch}. This is obtained by identifying the coefficients of the series expansions at $0$ and $\infty$ of both sides of \eqref{inte1}-\eqref{inte2}. It thus relies on the series expansion~\eqref{A-expansion} of $\A(x,\g)$ and the one of the functions $\Omega_m(x,\g)$ given by Corollary~\ref{cor-omega-expansion}.

\bigskip

We first prove that the condition~\eqref{inte1} is actually equivalent to the universal Schlesinger system~\eqref{univ-sch}. For $x\in \CD$, $\g\in\U$, and for all $m\in\Z$ we have on one hand
\begin{align*}
\left[ \Omega_m(x,\g),\A(x,\g)\right] &= \left[ -\sum_{n\geq1}\frac{B_{m+n}(\g)}{x^{n}},  \sum_{n\geq0} \frac{B_n(\g)}{x^{n+1}}\right]\\
&= \sum_{n\geq1}  \frac{1}{x^{n+1}} \sum_{k=0}^{n-1} [B_k(\g),B_{m+n-k}(\g)]\, .
\end{align*}
On the other hand
\begin{align*}
\left(L_m\cdot \A\right)(x,\g) -  \frac{\dd \Omega_m}{\dd x}(x,\g) &= \sum_{n\geq0} \frac{L_m\cdot B_n(\g)}{x^{n+1}} -\sum_{n\geq1} \frac{n}{x^{n+1}} B_{m+n}(\g)\\
&=\sum_{n\geq0} \frac{1}{x^{n+1}}\left(L_m\cdot B_n(\g) -nB_{m+n}(\g) \right)\, .
\end{align*}
So by identifying the coefficients, we see that the integrability condition~\eqref{inte1} restricted onto $\CD\times\U$ is equivalent to $L_m\cdot B_0=0$ and 
\[
\forall n \geq1 \ \forall m\in\Z \quad L_m\cdot B_n = \sum_{k=0}^{n-1} [B_k,B_{m+n-k}] + nB_{m+n}
\]
on $\U$. The case $n\leq -1$ is given by the same way by the integrability condition~\eqref{inte1} on $\D\times\U$.

\bigskip

To end the proof of the theorem, it now suffices to show that System~\eqref{univ-sch} always implies the second integrability condition~\eqref{inte2}. We proceed as above: for $x\in \CD$, $\g\in\U$, and for all $m, n \in\Z$ we have on one hand
\begin{align*}
\left(L_m\cdot\Omega_n\right)(x,\g) - \left(L_n\cdot\Omega_m\right)(x,\g) = \sum_{k\geq1} \frac{1}{x^{k}} \left(-L_m\cdot B_{n+k}(\g) + L_n\cdot B_{m+k}(\g)\right).
\end{align*}
On the other hand
\begin{multline*}
\left[ \Omega_m(x,\g),\Omega_n(x,\g)\right]  + (n-m)\Omega_{m+n} (x,\g)\\
= \sum_{k,l\geq1}  \frac{1}{x^{k+l}}\left[ B_{m+k},B_{n+l}\right] - (n-m)\sum_{k\geq1} \frac{B_{m+n+k}}{x^k}\\
= \sum_{k\geq2}  \frac{1}{x^k} \left(  \sum_{l=1}^{k-1} [B_{m+k-l},B_{n+l}] \right) - (n-m)\sum_{k\geq1} \frac{B_{m+n+k}}{x^k} \, .
\end{multline*}
The integrability condition~\eqref{inte2} restricted onto $\CD\times\U$ is thus equivalent to 
\begin{equation}
L_m\cdot B_{n+k}- L_n\cdot B_{m+k} = -\sum_{l=1}^{k-1} [B_{m+k-l},B_{n+l}] + (n-m)B_{m+n+k} 
\label{inte2b}
\end{equation}
(for all $m, n \in\Z$ and $k\geq1$) where the sum is assumed to be $0$ when $k=1$. Assume now $n\geq1$. Then the equation number $(m,n+k)$ of the Schlesinger system~\eqref{univ-sch} writes
\[
L_m\cdot B_{n+k} = \sum_{l=0}^{n+k-1} \left[B_l,B_{m+n+k-l}\right] + (n+k)B_{m+n+k} \, ,
\]
whereas the equation number $(n,m+k)$ of the equivalent system~\eqref{univ-sch-2} writes
\[
L_n\cdot B_{m+k} = \sum_{l=0}^{n} \left[B_l,B_{m+n+k-l}\right] + (m+k)B_{m+n+k} \, .
\]
We thus have
\[
L_m\cdot B_{n+k}- L_n\cdot B_{m+k} =  \sum_{l=n+1}^{n+k-1} \left[B_l,B_{m+n+k-l}\right] + (n-m)B_{m+n+k} 
\]
that is to say the condition~\eqref{inte2b} for $m\in\Z$, $n\geq1$, $k\geq1$. By symmetry in $(m,n)$, we have \eqref{inte2b} for $n\in\Z$, $m\geq0$, $k\geq1$ too. Using the proof of lemma~\ref{lemma-sym}, we obtain by the same way the case $m\leq-1$ and $n\leq-1$.

This proves that the Schlesinger system~\eqref{univ-sch} implies the integrability condition~\eqref{inte2} on $\CD\times\U$. On $\D\times\U$, the proof is similar.
\end{proof}


\begin{thebibliography}{10}

\bibitem{DeiftZhou02}
P.~Deift and X.~Zhou.
\newblock A priori ${L}^p$-estimates for solutions of {R}iemann--{H}ilbert
  problems.
\newblock {\em International Mathematics Research Notices},
  2002(40):2121--2154, 2002.

\bibitem{Desideri13}
L.~Desideri.
\newblock Probl\`eme de {P}lateau, \'equations fuchsiennes et probl\`eme de
  {R}iemann-{H}ilbert.
\newblock {\em M\'em. Soc. Math. Fr. (N.S.)}, (133):vi+116, 2013.

\bibitem{FIKN}
A.~Fokas, A.~Its, A.~Kapaev, and V.~Novokshenov.
\newblock {\em Painlev{\'e} Transcendents: The Riemann--Hilbert Approach},
  volume 128.
\newblock American Mathematical Society, 2006.

\bibitem{Giorgadze}
G.~Giorgadze.
\newblock On the {R}iemann--{H}ilbert problems.
\newblock {\em arXiv: math/9804035}, 1998.

\bibitem{Guest}
M.~Guest.
\newblock {\em Harmonic maps, loop groups, and integrable systems}, volume~38.
\newblock Cambridge University Press, 1997.

\bibitem{KorotkinSamtleben}
D.~Korotkin and H.~Samtleben.
\newblock Generalization of {O}kamoto's equation to arbitrary {$2\times 2$}
  {S}chlesinger system.
\newblock {\em Adv. Math. Phys.}, pages Art. ID 461860, 14, 2009.

\bibitem{LitvinchukSpitkovskii}
G.~Litvinchuk and I.~Spitkovskii.
\newblock {\em Factorization of measurable matrix functions}, volume~25.
\newblock Birkhauser, 1987.

\bibitem{PressleySegal}
A~Pressley and G~Segal.
\newblock {\em Loop groups}.
\newblock Oxford Mathematical Monographs, Clarendon Press, Oxford, 1986.

\bibitem{Teo}
L.-P. Teo.
\newblock Conformal mappings and dispersionless {T}oda hierarchy.
\newblock {\em Communications in Mathematical Physics}, 292(2):391--415, 2009.

\bibitem{Teo2}
L.-P. Teo.
\newblock Conformal mappings and dispersionless {T}oda hierarchy {II}: general
  string equations.
\newblock {\em Communications in Mathematical Physics}, 297(2):447--474, 2010.

\bibitem{VajiacUhlenbeck}
M.~Vajiac and K.~Uhlenbeck.
\newblock Virasoro actions and harmonic maps (after {S}chwarz).
\newblock {\em J. Differential Geom.}, 80(2):327--341, 2008.

\bibitem{WiegmannZabrodin}
P.~Wiegmann and A.~Zabrodin.
\newblock Conformal maps and integrable hierarchies.
\newblock {\em Communications in Mathematical Physics}, 213(3):523--538, 2000.

\end{thebibliography}

\end{document}